\newcommand{\excise}[1]{}
\newtheorem{theorem}{Theorem}[section]
\newtheorem{lemma}[theorem]{Lemma}
\newtheorem{corollary}[theorem]{Corollary}
\newtheorem{prop}[theorem]{Proposition}
\theoremstyle{definition}
\newtheorem{definition}[theorem]{Definition}
\newtheorem{example}[theorem]{Example}
\newtheorem{remark}[theorem]{Remark}
\DeclareMathOperator{\PF}{\mathsf{PF}}
\DeclareMathOperator{\IPF}{\mathsf{IPF}}
\newcommand{\outcome}{\mathcal{O}} % outcome of a parking function
\newcommand{\bioutcome}{\bar{\mathcal{O}}} % pair of outcomes of pieces of an IPF
\newcommand{\defterm}[1]{\textbf{\boldmath#1\unboldmath}} % Jeremy's preference for terms being defined
\newcommand{\reach}{\unrhd_R}
\newcommand{\reachby}{\unlhd_R}
\newcommand{\notreach}{\mkern-1mu\not\mathrel{\mkern1mu\unrhd_R}\mkern1mu}
\newcommand{\rcov}{{\,\mathrlap{\gtrdot}\rhd}_R\,}
\newcommand{\rcovby}{{\,\mathrlap{\lessdot}\lhd}_R\,}
\newcommand{\Sym}{\mathfrak{S}}
\newcommand{\cc}{\mathbf{c}}
\newcommand{\cf}{\mathsf{c}}
\newcommand{\df}{\mathsf{d}}
\newcommand{\DF}{\mathsf{D}}
\newcommand{\inv}{\ell}
\newcommand{\Zz}{\mathbb{Z}}
\newcommand{\0}{\emptyset}
\newcommand{\x}{\times}
\begin{document}
\title{Interval Parking Functions}
\thanks{This work was completed in part at the 2019 Graduate Research Workshop in Combinatorics, which was supported in part by NSF grant \#1923238, NSA grant \#H98230-18-1-0017,
a generous award from the Combinatorics Foundation, and Simons Foundation Collaboration Grants \#426971 (to M.~Ferrara) and \#315347 (to J.~Martin).}
\author{Emma Colaric}
\address{Department of Mathematics, University of Kansas, Lawrence, KS 66045}
\email{ecolaric@ku.edu}
\author{Ryan DeMuse}
\address{Department of Mathematics, University of Denver, Denver, CO 80210}
\email{ryan.demuse@du.edu}
\author{Jeremy L.\ Martin}
\thanks{JLM was supported in part by Simons Foundation Collaboration Grant \#315347.}
\address{Department of Mathematics, University of Kansas, Lawrence, KS 66045}
\email{jlmartin@ku.edu}
\author{Mei Yin}
\thanks{MY was supported in part by the University of Denver's Faculty Research Fund.}
\address{Department of Mathematics, University of Denver, Denver, CO 80210}
\email{mei.yin@du.edu}
\date{\today}
\subjclass[2010]{% Doing my best; nothing seems to fit quite right
05A05, % (Enumerative combinatorics) Permutations, words, matrices
05E15, % Combinatorial aspects of groups and algebras
06D99, % Distributive lattices (misc.)
20F55} % Reflection and Coxeter groups (group-theoretic aspects)
\keywords{Parking function, bubble-sort, permutation, Bruhat order, weak order, sorting order}

\begin{abstract}
Interval parking functions (IPFs) are a generalization of ordinary parking functions in which each car is willing to park only in a fixed interval of spaces.  Each interval parking function can be expressed as a pair $(a,b)$, where $a$ is a parking function and $b$ is a dual parking function.  We say that a pair of permutations $(x,y)$ is \emph{reachable} if there is an IPF $(a,b)$ such that $x,y$ are the outcomes of $a,b$, respectively, as parking functions.  Reachability is reflexive and antisymmetric, but not in general transitive.  We prove that its transitive closure, the \emph{pseudoreachability order}, is precisely the bubble-sorting order on the symmetric group $\Sym_n$, which can be expressed in terms of the normal form of a permutation in the sense of du~Cloux; in particular, it is isomorphic to the product of chains of lengths $2,\dots,n$.  It is thus seen to be a special case of Armstrong's sorting order, which lies between the Bruhat and (left) weak orders.  %The number of reachable pairs of permutations appears to be connected to the number of covers in Bruhat order.
\end{abstract}

\maketitle

\section{Introduction}

We begin by briefly recalling the theory of parking functions, introduced in various contexts in~\cite{KW,Pyke,Riordan}; see \cite{Yan} for a comprehensive survey.  Consider a parking lot with $n$ parking spots placed sequentially along a one-way street.  A line of $n$ cars enters the lot, one by one.  The $i^{th}$ car drives to its preferred spot $a(i)$ and parks there if possible; if the spot is already occupied then the car parks in the first available spot.  The list of preferences $a=(a(1),\dots,a(n))$ is called a \defterm{parking function} if all cars successfully park; in this case the \defterm{outcome} is the permutation $\outcome(a)=w=(w(1),\dots,w(n))$, where the $i^{th}$ car parks in spot $w(i)$.  It is well known that the number of parking functions for $n$ cars is $(n+1)^{n-1}$.  Parking functions are an established area of research in combinatorics, with connections to labeled trees, non-crossing partitions, the Shi arrangement, symmetric functions, and other topics.

In this paper, we study a generalization of parking functions in which the $i^{th}$ car is willing to park only in an interval $[a(i),b(i)]\subseteq\{1,\dots,n\}$.  If all cars can successfully park then we say that the pair $(a,b)=( (a(1),\dots,a(n)), (b(1),\dots,b(n)) )$ is an \defterm{interval parking function}, or IPF.  (If $b(i)=n$ for all $i$, then we recover the classical case described above.)  It is easy to show that there are $n!(n+1)^{n-1}$ IPFs for $n$ cars, and that if $(a,b)$ is an IPF then the sequences $a$ and $b^*=(n+1-b(n),\dots,n+1-b(1))$ must both be parking functions, raising the question of the relationship between the permutations $\outcome(a)$ and $\outcome(b^*)$.

We say that a pair of permutations $(x,y)\in\Sym_n\x\Sym_n$ is \defterm{reachable}, written $x\reach y$, if there exists an IPF $(a,b)$ such that $x=\outcome(a)$ and $y^*=\outcome(b^*)$.  Reachability is \emph{not} a partial order on $\Sym_n$ because it is not transitive; however, its transitive closure is a partial order, which we call \defterm{pseudoreachability}.  The main result of this paper is that pseudoreachability order on~$\Sym_n$ is precisely the \emph{bubble-sorting order} on $\Sym_n$ (see \cite[Example 3.4.3]{BB}), which in turn is an instance of the more general \defterm{sorting order} defined by Armstrong~\cite{Armstrong} for Coxeter systems.   In particular, pseudoreachability lies between Bruhat and (left) weak order in $\Sym_n$, and it is a self-dual distributive lattice, poset-isomorphic to the product $C_2\x\cdots\x C_n$, where $C_i$ denotes the chain with $i$ elements.

The proof proceeds as follows.  The first significant result, Theorem~\ref{thm:bruhat}, states that $(x,y)$ is reachable only if $x\geq_By$, where $\geq_B$ denotes Bruhat order.  By counting the fibers of the map $(a,b)\mapsto(x,y)$, we establish Theorem~\ref{thm:RC}, the Reachability Criterion, which is a key technical tool in what follows.  Using this criterion, we show in \S\ref{sec:pseudoreach-order} that pseudoreachability is no weaker than left weak order, and use this result to show that it is graded by length, just like the Bruhat and weak orders.  This grading is key for the proof in \S\ref{sorting} that pseudoreachability coincides with the bubble-sorting order.

Initially, we had hoped to characterize reachability of a pair $(x,y)$ in terms of pattern-avoidance conditions on $x$ and $y$.  This does not appear to be possible in general, but Section~\ref{sec:avoid} contains partial results in this direction: Theorems~\ref{thm:213-avoiding} and~\ref{thm:x} give sufficient conditions for a pair $(x,y)$ to be reachable, provided that $x\geq_By$.  %Finally, we conjecture that the number of reachable pairs $(x,y)\in\Sym_n\x\Sym_n$ gives what may be the first combinatorial interpretation of a certain analytically defined exponential generating function; the details are in Section~\ref{sec:counting-reachable}.

%\section{Acknowledgements} \label{sec:ack}
The authors thank Margaret Bayer for proposing the study of interval parking functions to EC and JLM at the KU Combinatorics Seminar in the spring of 2019.  We are grateful to the Graduate Research Workshop in Combinatorics (GRWC) for providing the platform for this collaboration in 2019, and in particular we acknowledge helpful discussions with GRWC participants Sean English and Sam Spiro. We thank Bridget Tenner for her observant comments and Richard Stanley for his communications and suggestions for several directions of future investigation. %which helped to greatly expand Section~\ref{sec:counting-reachable}.

\section{Preliminaries} \label{sec:notation}

Square brackets always denote integer intervals: For $m,n\in\Zz$ we put $[m,n]=\{m,\,\dots,\,n\}$ and $[n]=[1,n]$.
Lists of positive integers (including permutations) will be regarded as functions: thus we will write $a=(a(1),\dots,a(n))$ rather than $a=(a_1,\dots,a_n)$.
Thus notation such as $x[a,b]$ means $\{x(a),x(a+1),\dots,x(b)\}$.
To simplify notation, we sometimes drop the parentheses and commas: e.g., $2431=(2,4,3,1)$.

Let $a=(a(1),\dots,a(n))$ and $b=(b(1),\dots,b(n))\in\Zz^n$.  We write $a\leq_Cb$ if $a(i)\leq b(i)$ for all $i\in[n]$; this is the \defterm{componentwise partial order} on $\Zz^n$.
The \defterm{conjugate} (or reverse complement) of $x\in[n]^n$ is the vector $x^* = (n+1-x(n), \dots, n+1-x(1))$.  Conjugation is an involution that reverses componentwise order. %Please note that the conjugation operation may be referred to by different names in the literature. For example, in permutation patterns, a conjugate is instead termed a reverse-complement.

If $\geq$ is a partial ordering on a set $S$, then $\gtrdot$ denotes the corresponding covering relation: $x\gtrdot y$ if $x>y$ and there exists no $z$ such that $x>z>y$.  It is elementary that if $\geq_1$ is a partial order at least as strong as $\geq_2$ (i.e., $x\geq_2y$ implies $x\geq_1y$), then $x>_2y$ and $x\gtrdot_1y$ together imply $x\gtrdot_2y$.

The symmetric group of all permutations of $[n]$ is denoted by~$\Sym_n$.  We will as far as possible follow the notation and terminology for the symmetric group used in \cite{BB}.  We set $e=(1,\dots,n)$ (the identity permutation) and $w_0=(n,n-1,\dots,1)$.
The permutation transposing $i$ and $j$ and fixing all other values is denoted $t_{ij}$, and we set $s_i=t_{i,i+1}$; the elements $s_1,\dots,s_{n-1}$ are the \defterm{standard generators}.  Our convention for multiplication is right to left, which is consistent with treating permutations as bijective functions $[n]\to[n]$.  Thus $t_{ij}x$ is obtained by transposing the \textit{digits} $i,j$ wherever they appear in $x$, while $x t_{ij}$ is obtained by transposing the digits in the $i^{th}$ and $j^{th}$ \textit{positions}.

We list some standard facts from the theory of $\Sym_n$ as a Coxeter system of type~A, with generators $S=\{s_1,\dots,s_{n-1}\}$; see \cite{BB} for details.  The \defterm{length} $\ell(x)$ of $x\in\Sym_n$ is the smallest number $k$ such that $x$ can be written as a product $s_{i_1}\cdots s_{i_k}$ of standard generators; in this case $s_{i_1}\cdots s_{i_k}$ is called a \defterm{reduced word} for $x$.  It is a standard fact that length equals number of inversions:
\begin{equation} \label{length-inv}
\ell(x)=\{(i,j):\ 1\leq i<j\leq n,\ x(i)>x(j)\}.
\end{equation}

The \defterm{Bruhat order} is the partial order $>_B$ on $\Sym_n$ defined as the transitive closure of the relations $x>t_{ij}x$ whenever $\inv(x)>\inv(t_{ij}x)$.  (Multiplying $x$ by $t_{ij}$ on the right rather than the left produces the same order, because $x t_{ij} x^{-1}$ is a transposition and $x t_{ij}=(x t_{ij} x^{-1})x$.)
The \defterm{(left) weak order} $>_W$ is the transitive closure of the relations $x>s_ix$ whenever $s$ is a standard generator and $\inv(x)>\inv(sx)$.  Both of these orders make $\Sym_n$ into a graded poset with bottom element $e$ and top element $w_0$.

\section{Parking functions and interval parking functions} \label{sec:intro}

We begin by recalling the theory of parking functions, introduced in various contexts in~\cite{KW,Pyke,Riordan}; see \cite{Yan} for a comprehensive survey.
Let $a=(a(1),\,\dots,\,a(n))\in[n]^n$.  Consider a parking lot with $n$ parking spaces placed sequentially along a one-way street.  Cars 1,\,\dots,\,$n$ enter the lot in order and try to park.

{\bf Algorithm~A:} The $i^{th}$ car parks in the first available space in the range $[a(i),n]$.  If no space in the range $[a(i),n]$ is available, the algorithm fails.

If Algorithm~A succeeds in parking every car, then the preference vector $a$ is called a \defterm{parking function}.  The set of all parking functions $a=(a(1),\,\dots,\,a(n))$ is denoted $\PF_{n}$.  It is well known that $|\PF_n|=(n+1)^{n-1}$ and that
\[\PF_n = \{a\in[n]^n:\ \tilde a(i)\leq i\ \ \forall i\}\]
where $\tilde{a}$ is the unique non-decreasing rearrangement of $a$; in particular, every rearrangement of a parking function is a parking function.

The \defterm{outcome} of a parking function $a\in\PF_n$ is the permutation $x=\outcome(a)=(x(1),\,\dots,\,x(n))$, where $x(i)$ is the spot in which car $i$ parks given the preference list $a$.

We now modify Algorithm~A to obtain our central object of study.

{\bf Algorithm~B:} Let $a,b\in[n]^n$ with $a\leq_Cb$.  The $i^{th}$ car parks in the first available space in the range $[a(i),b(i)]$.  If no space in the range $[a(i),b(i)]$ is available, the algorithm fails.

\begin{definition}
If Algorithm~B succeeds in parking every car, then $\cc=(a,b)$ is called an \defterm{interval parking function}, or IPF.  The set of all interval parking functions for $n$ cars is denoted $\IPF_n$.  The \defterm{feasible interval} for the $i^{th}$ car is $[a(i),b(i)]$.
\end{definition}

For example,
\[\IPF_2 = \{(11,12),\ (11,22),\ (12,12),\ (12,22),\ (21,21),\ (21,22)\}.\]
%where we have abbreviated, e.g., $((1,1),(2,2))$ by $(11,22)$.
Unlike ordinary parking functions, IPFs are \emph{not} invariant under the action of $\Sym_2$ by permuting cars.  For example, $(11,12)$ is an IPF but $(11,21)$ is not.

\begin{prop} \label{IPF-characterization} Let $a,b\in[n]^n$.  Then:
\begin{enumerate}
\item $a\in\PF_n$ if and only if $(a,(n,\dots,n))\in\IPF_n$.
\item $(a,b)\in\IPF_n$ if and only if $a\in\PF_n$ and $\outcome(a) \le_C b$.
\end{enumerate}
\end{prop}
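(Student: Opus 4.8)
The plan is to prove both parts by directly analyzing how Algorithm~B behaves relative to Algorithm~A, exploiting the fact that the only difference between the two algorithms is the upper cutoff $b(i)$ on each car's search range. Part~(1) is essentially a reconciliation of definitions: when $b=(n,\dots,n)$, the range $[a(i),b(i)]=[a(i),n]$ is exactly the range used in Algorithm~A, so Algorithm~B reduces verbatim to Algorithm~A. Hence $(a,(n,\dots,n))$ succeeds in parking all cars precisely when $a$ is a parking function, which is the claim. I would state this in one or two sentences.

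For Part~(2), I would prove the two implications separately. The forward direction assumes $(a,b)\in\IPF_n$. First I would observe that if Algorithm~B parks all cars, then \emph{a fortiori} Algorithm~A parks all cars with the same preference vector $a$: since Algorithm~A searches the larger range $[a(i),n]\supseteq[a(i),b(i)]$, any space available to Algorithm~B is available to Algorithm~A. The key point, which I expect to be the crux of the argument, is that the two algorithms in fact produce \emph{identical} outcomes car-by-car, i.e.\ $\outcome(a)$ computed by Algorithm~A equals the parking assignment produced by Algorithm~B. I would argue this by induction on the car index $i$: assuming cars $1,\dots,i-1$ park in the same spots under both algorithms, the set of occupied spaces is the same when car~$i$ enters, so both algorithms begin their search at $a(i)$ and scan upward to the first free space; Algorithm~A finds some spot $x(i):=\outcome(a)(i)$, and since Algorithm~B succeeds it must find the \emph{same} first free spot, which forces $x(i)\le b(i)$. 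Running this over all $i$ gives simultaneously that $a\in\PF_n$ and that $\outcome(a)\le_C b$.

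For the reverse direction, I would assume $a\in\PF_n$ and $\outcome(a)\le_C b$ and show $(a,b)\in\IPF_n$. Again by induction on~$i$, if the occupied spaces agree through car~$i-1$, then car~$i$ under Algorithm~B searches $[a(i),b(i)]$ and, by the same first-available logic, reaches the spot $x(i)=\outcome(a)(i)$; the hypothesis $x(i)\le b(i)$ guarantees this spot lies within $[a(i),b(i)]$, so Algorithm~B succeeds for car~$i$ and the two outcomes continue to agree. Thus every car parks and $(a,b)\in\IPF_n$. The main obstacle is the careful bookkeeping in the shared inductive step — verifying that identical occupancy histories force identical search behavior, and hence identical landing spots, so that the single condition $\outcome(a)\le_C b$ exactly captures feasibility. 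Everything else is routine.
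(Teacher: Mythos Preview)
Your proposal is correct and follows essentially the same approach as the paper: both arguments rest on the observation that Algorithms~A and~B proceed identically car-by-car as long as each car's landing spot stays within its feasible interval. The paper compresses this into two sentences (and phrases the forward direction of~(2) as a contrapositive), whereas you spell out the shared inductive step explicitly, but the content is the same.
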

\begin{proof}
For (1), if $b(i)=n$ for all $i$ then Algorithm~B is identical to Algorithm~A.
For (2), if the given conditions hold, then the execution of Algorithm~B mimics that of Algorithm~A.  On the other hand, if $a$ is not a parking function, then some car will not find a spot, while if $\outcome(a)\not\leq_Cb$ then some car will not find a spot in its own feasible interval.
\end{proof}

As a consequence of the proof of (2), the outcome $\outcome(\cc)$ of $\cc=(a,b)$ is just $\outcome(a)$.  Moreover, for every $a\in\PF_n$, there are precisely $n!$ choices for $b$ such that $(a,b)\in\IPF_n$.  (This fact was first observed by Sean English.) In particular,
\begin{equation} \label{count-IPF}
\left|\IPF_{n}\right| = n!(n+1)^{n-1}.
\end{equation}

\begin{prop} \label{lots-of-facts}
Let $\cc = (a,b)\in\IPF_n$.  Then:
\begin{enumerate}
\item $b^*\in\PF_n$.
\item $a \le_C \outcome(\cc) \le_C b$ and $\outcome(b^*)^* \le_C b$.
\end{enumerate}
\end{prop}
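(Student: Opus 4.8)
The plan is to reduce all three assertions to elementary facts about the parking process together with the standard counting reformulation of the characterization of $\PF_n$ recalled above: a list $u\in[n]^n$ lies in $\PF_n$ if and only if $|\{i : u(i)\le \ell\}|\ge \ell$ for every $\ell\in[n]$. Throughout I write $w=\outcome(\cc)=\outcome(a)$, which is legitimate since $\outcome(\cc)=\outcome(a)$ was noted after Proposition~\ref{IPF-characterization}.

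First I would dispose of the inequalities $a\le_C w\le_C b$ in part (2), which are immediate: in Algorithm~B the $i^{th}$ car parks somewhere in its feasible interval $[a(i),b(i)]$, so $a(i)\le w(i)\le b(i)$. Equivalently, $w\le_C b$ is exactly the content of Proposition~\ref{IPF-characterization}(2), and $a\le_C w$ is the general fact that a car never parks to the left of its preference.

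For part (1), the key step is to translate the statement $b^*\in\PF_n$ into a condition on $b$ itself. Unwinding the definition $b^*(j)=n+1-b(n+1-j)$ and applying the counting criterion, one checks, after reindexing (first $i=n+1-j$, then $m=n+1-\ell$), that $b^*\in\PF_n$ is equivalent to the \emph{reversed} inequality $|\{i : b(i)\le \ell\}|\le \ell$ for all $\ell\in[n]$. This inequality in turn follows from $w\le_C b$: since $b(i)\le\ell$ forces $w(i)\le b(i)\le\ell$, we get the containment $\{i:b(i)\le\ell\}\subseteq\{i:w(i)\le\ell\}$, and the right-hand set has exactly $\ell$ elements because $w$ is a permutation. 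I expect this reindexing — keeping the direction of the inequality straight while conjugation simultaneously reflects indices and values — to be the main (though still routine) obstacle; it is the only place where real care is needed.

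Finally, for $\outcome(b^*)^*\le_C b$, note that part~(1) makes $\outcome(b^*)$ well-defined, and the same general parking fact gives $b^*\le_C\outcome(b^*)$ (each car parks weakly to the right of its preference). Since conjugation is an order-reversing involution on $[n]^n$ for $\le_C$, applying $*$ to $b^*\le_C\outcome(b^*)$ yields $\outcome(b^*)^*\le_C (b^*)^*=b$, as desired. In particular no genuine duality between $(a,b)$ and $(b^*,a^*)$ is required: each claim reduces either to the counting criterion or to the trivial observation that parking never moves a car left of its preference.
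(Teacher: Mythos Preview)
Your argument is correct, and for the second half of part~(2) it matches the paper's proof exactly: apply conjugation to $b^*\le_C\outcome(b^*)$.

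For part~(1), however, the paper avoids the reindexing you flag as ``the main obstacle.'' Instead of translating the parking-function criterion for $b^*$ back into a condition on $b$, the paper simply applies conjugation directly to the already-established inequality $\outcome(\cc)\le_C b$, obtaining $b^*\le_C\outcome(\cc)^*$. Since $\outcome(\cc)^*$ is a permutation, $b^*$ is componentwise bounded above by a permutation and is therefore a parking function (this last implication is immediate from the counting criterion, or from the nondecreasing-rearrangement characterization). Your route and the paper's are logically equivalent---both ultimately compare $b$ to the permutation $w$ via the counting criterion---but the paper's phrasing sidesteps the index bookkeeping entirely by working on the $b^*$ side from the start.
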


\begin{proof}
\noindent
\begin{enumerate}
\item From $\outcome(\cc) \leq_C b$, one has $b^* \leq_C \outcome(\cc)^*$, the latter is a permutation. Hence $b^*$ is a parking function.
\item Evidently $a \le_C \outcome(\cc) \le_C b$. By (1), $b^*$ is a parking function. Thus $b^* \le_C \outcome(b^*)$. Conjugation reverses the order $\leq_C$ and is an involution, so $\outcome(b^*)^* \le_C (b^*)^* = b$.
\qedhere
\end{enumerate}
\end{proof}

\section{The Bruhat property} \label{sec:Bruhat}

In this section, we prove another property of interval parking functions related to Bruhat order on permutations.  We use the following characterization of Bruhat order~\cite[Thm.~2.1.5, p.32]{BB}: $y\leq_Bx$ if and only if
\begin{equation} \label{bruhat-criterion}
y\langle i,j\rangle\leq x\langle i,j\rangle \qquad \forall i,j\in[n]
\end{equation}
where
\begin{equation} \label{angle-brackets}
u\langle i,j\rangle = \#\{k\in[i]:\ u(k)\geq j\}.
\end{equation}
(This quantity is notated $u[i,j]$ in \cite{BB}, but we reserve that notation for the image of an interval under a permutation.)
For later use, we observe that by pigeonhole, it is always the case that
\begin{equation} \label{bracket-ineq}
x\langle i,j\rangle\geq i-j+1.
\end{equation}
Suppose that $\cc=(a,b)$ is an IPF, and let $x=\outcome(a)$ and $y=\outcome(b^*)^*$.  Then $x\langle i,j\rangle$ is the number of cars $1,\,\dots,\,i$ that park at or after spot $j$ under the parking function $a$.
%Verification:
%\begin{align*}
%\outcome(b^*)^*[i,j] = y[i,j]
%&= \#\{k\in[i]:\ y(k)\geq j\}\\
%&= \#\{k:\ 1\leq k\leq i,\ n+1-y(n+1-k)^*\geq j\}
%\intertext{(note that $y^*(m)$ is the spot where car $m$ parks using $b^*$)}
%&= \#\{m:\ 1\leq n+1-m\leq i,\ n+1-y^*(m)\geq j\}\\
%&= \#\{m:\ n+1-i\leqm\leq n,\ y^*(m)\leq n+1-j\}
%\end{align*}

\begin{theorem} \label{thm:bruhat}
Suppose that $\cc = (a,b)$ is an IPF.  Let $x=\outcome(a)$ and $y=\outcome(b^*)^*$.  Then $x\geq_By$.
\end{theorem}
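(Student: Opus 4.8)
The plan is to verify the Bruhat criterion \eqref{bruhat-criterion} directly, i.e. to show that $y\langle i,j\rangle\le x\langle i,j\rangle$ for all $i,j\in[n]$. As noted just before the statement, $x\langle i,j\rangle=\#\{k\le i:\ x(k)\ge j\}=|S_i\cap[j,n]|$, where $S_i=\{x(1),\dots,x(i)\}$ is the set of spots occupied after the first $i$ cars have parked under $a$ (equivalently, under the IPF $\cc$, since $\outcome(\cc)=x\le_C b$). The whole problem is thus to produce a matching lower bound on $x\langle i,j\rangle$, and the first task is to give $y\langle i,j\rangle$ a comparable parking interpretation.

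First I would unwind the conjugation in $y=\outcome(b^*)^*$. Reflecting the lot ($s\mapsto n+1-s$) and relabelling cars ($i\mapsto n+1-i$) turns ordinary parking for $b^*$ into the \emph{dual} (rightward) parking process for $b$: cars are processed in the reverse order $n,n-1,\dots,1$, and each car takes the \emph{rightmost} free space in $[1,b(i)]$. Writing $y(k)$ for the spot taken by car $k$ in this process, one has $\{y(1),\dots,y(n)\}=[n]$ (all cars park, since $b^*\in\PF_n$), and, because cars $i{+}1,\dots,n$ are processed first, the set $T:=\{y(i+1),\dots,y(n)\}$ is exactly what they occupy on an initially empty lot. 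The remaining cars fill $[n]\setminus T$, so $y\langle i,j\rangle=(n-j+1)-|T\cap[j,n]|$. Combining this with $|S_i\cap[j,n]|=(n-j+1)-|\{x(i+1),\dots,x(n)\}\cap[j,n]|$ (valid since $x$ is a permutation), the inequality $y\langle i,j\rangle\le x\langle i,j\rangle$ becomes equivalent to
\[
|T\cap[j,n]|\ \ge\ |\{x(k):\ k>i\}\cap[j,n]|\qquad\text{for all }i,j.
\]

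This last inequality is the heart of the matter, and I would read it as a statement about the fixed set of cars $C=\{i+1,\dots,n\}$: in the dual process they occupy more of every suffix $[j,n]$ than they do in the IPF. Both sides count cars of $C$ assigned to distinct spots of $[j,n]$ subject to the upper bound $(\cdot)\le b(k)$ — on the right by the IPF outcome $x$, on the left by the dual process. The dual process assigns each car of $C$ to the rightmost available space $\le b(k)$ on an empty lot, and I would prove by a greedy/exchange (Hall-type) argument that this \emph{maximizes}, for every $j$ simultaneously, the number of cars of $C$ placed in the suffix $[j,n]$ subject to the deadlines $b(k)$. Since the restriction of $x$ to $C$ is merely one feasible such assignment, its suffix count cannot exceed that maximum, which is precisely $|T\cap[j,n]|$; this yields the displayed inequality and hence the theorem.

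The main obstacle is this greedy-optimality step, together with getting the conjugation bookkeeping right: the $a$-process and the dual $b$-process run in opposite orientations and opposite car orders, so a naive car-by-car coupling of the two outcomes fails. The reduction above is what sidesteps that difficulty, by isolating a single fixed set of cars $C$ and comparing two placements of $C$ against the common upper bounds $b$; once the problem is phrased as maximizing suffix occupancy in an interval (deadline) bipartite graph, rightmost-greedy optimality is standard. As a fallback one can instead prove the displayed inequality by induction on $|C|$, tracking the domination order on occupied-spot sets, but the matching formulation seems cleaner.
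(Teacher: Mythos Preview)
Your plan is correct and lands on the same key inequality as the paper: after reducing via \eqref{bruhat-criterion}, both arguments come down to showing that for the fixed block of cars $C=\{i{+}1,\dots,n\}$ one has
\[
\bigl|\{x(k):k\in C\}\cap[j,n]\bigr|\ \le\ \bigl|\{y(k):k\in C\}\cap[j,n]\bigr|.
\]
The packaging differs. You reinterpret $y$ via the dual (rightward) parking of $b$ and then invoke a greedy/Hall optimality lemma: the rightmost-available placement of $C$ against the deadlines $b(k)$ dominates every feasible injection in every suffix, and $x|_C$ is one such injection since $x\le_C b$. The paper instead keeps everything on the $b^*$ side: from $|y^*[n{-}i{+}1,n]\cap[1,n{+}1{-}j]|=q$ it reads off that the first $n{-}i$ cars in the $b^*$-process fill at most $n{+}1{-}j{-}q$ spots of $[1,n{+}1{-}j]$, deduces that the multiset $\{b(i{+}1),\dots,b(n)\}$ contains no $(n{-}j{-}q{+}2)$-tuple componentwise $\ge (j,j{+}1,\dots,n{-}q{+}1)$, and observes that any $n{-}j{-}q{+}2$ cars of $C$ landing in $[j,n]$ under the IPF would produce exactly such a tuple of $b$-values. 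That ``forbidden subset'' step is precisely the contrapositive of your greedy/Hall lemma, so the two proofs are the same combinatorics viewed from opposite ends.

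What each buys: your dual-process framing is cleaner conceptually (one fixed set of cars, two placements against the same deadlines, compare suffix occupancies), and it makes transparent why only the constraints $x(k)\le b(k)$ matter; but you still owe the short exchange argument that rightmost-greedy dominates every feasible placement in the majorization sense. The paper's version avoids stating that lemma separately, at the cost of threading the conjugation bookkeeping through an explicit forbidden-subset calculation.
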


\begin{proof}
First, we may assume without loss of generality that $x=a$, because replacing $a$ with $x$ doesn't change the execution of Algorithm~B (the $i^{th}$ car will have to drive to spot $x(i)$ anyway, and it is able to park there because $\cc$ is an IPF).

Fix $i,j\in[n]$, and let $p=x\langle i,j\rangle$ and $q=y\langle i,j\rangle$.  By~\eqref{bruhat-criterion} we wish to show that $p\geq q$.  By definition of $y\langle i,j\rangle$ we have
\begin{equation} \label{bruhat:1}
\Big|y[1,i]\cap[j,n]\Big|=q
\end{equation}
or equivalently
\begin{equation} \label{bruhat:2}
\Big|y^*[n-i+1,n]\cap[1,n+1-j]\Big|=q.
\end{equation}
Therefore, when Algorithm~A is run on the parking function $b^*$ with outcome $y^*$, the first $n-i$ cars must leave open at least $q$ spaces in the range $[1,n+1-j]$, so they cannot fill as many as $(n+1-j)-q+1=n-j-q+2$ of them.  Therefore, $b^*[1,n-i]$ can contain no subset
$\{v(1),\,\dots,\,v^*(n-j-q+2)\}$ such that
\[(v(1),\,\dots,\,v^*(n-j-q+2))\leq_C (q,\,\dots,\,n+1-j).\]
Equivalently, $\{b(i+1),\,\dots,\,b(n)\}$ can contain no subset
$\{v(1),\,\dots,\,v(n-j-q+2)\}$ such that
\[(v(1),\,\dots,\,v(n-j-q+2))\geq_C (j,\,\dots,\,n-q+1).\]
It follows that when Algorithm~B is run on $\cc$, no more than $n-j-q+1$ of the last $n-i$ cars will park in the spots $[j,n]$.  On the other hand, since $x=\outcome(\cc)$, no more than $p=x\langle i,j\rangle$ of the first $i$ cars can park in the spots $[j,n]$.  Therefore, the total number of cars that park in $[j,n]$ is at most
\[(n+1-j-q)+p = |[j,n]|+(p-q).\]
On the other hand, exactly $|[j,n]|$ cars park in $[j,n]$.  It follows that $p\geq q$, as desired.
\end{proof}

Theorem~\ref{thm:bruhat} asserts that there is a well-defined \defterm{bioutcome} function
\begin{equation} \label{define-bioutcome}
\begin{array}{llll}
\bioutcome:&\IPF_n&\to&\{(x,y)\in\Sym_n\x\Sym_n:\ x\geq_By\}\\
&(a,b)&\mapsto&(\outcome(a),\outcome(b^*)^*).
\end{array}
\end{equation}
We say that a pair $(x,y)\in\Sym_n\x\Sym_n$ is \defterm{reachable} if it is in the image of $\bioutcome$; in this case we write $x\reach y$.  (We use this notation rather than $x\geq_R y$ because reachability is not a partial order on $\Sym_n$, as we will discuss shortly.)
Then Theorem~\ref{thm:bruhat} asserts that all reachable pairs are related in Bruhat order.

\begin{remark} \label{unreachable}
If $a$ and $b^*$ are parking functions such that $\outcome(a)\geq_B\outcome(b^*)^*$, it does \emph{not} follow that $\cc=(a,b)$ is an IPF.  For example, if $a=w_0$ and $b$ is a permutation, then certainly $a=\outcome(a)\geq_B\outcome(b^*)^*=b$, but $(a,b)$ is an IPF only if $b=w_0$ as well.

Moreover, if $x,y\in\Sym_n$ with $x\geq_By$, there does not necessarily exist any IPF $\cc=(a,b)$ such that $\bioutcome(\cc)=(x,y)$.  For example, when $n=3$, take $(x,y)=(321,213)$, so that $y^*=132$.  Then $a=321$ is the only parking function with $\outcome(a)=x$.  By Prop.~\ref{lots-of-facts}(2) we must have $b\geq_Ca$, so $b\in\{321,331, 322, 332, 323,333\}$ and $b^*\in\{321,311,221,211,121,111\}$.  But none of these parking functions have outcome $y^*=132$.
\end{remark}

The relation of reachability is reflexive (because $\bioutcome(x,x)=(x,x)$ for all $x\in\Sym_n$) and antisymmetric (as a consequence of Theorem~\ref{thm:bruhat}).  However, it is not transitive: for example,
$321\notreach 213$, as just shown, but $(321,312)=\bioutcome(312,322)$ and $(312,213)=\bioutcome(312,313)$ are reachable.  This observation motivates the following definition.

\begin{definition} \label{def-pseu}
We say that $(x,y)$ is \defterm{pseudoreachable}, written $x\geq_P y$, if there is a sequence $x=x_0\reach x_1\reach\cdots\reach x_k=y$.  That is, pseudoreachability is the transitive closure of reachability.  As such, it is a partial order on $\Sym_n$, which by Theorem~\ref{thm:bruhat} is no stronger than Bruhat order.
\end{definition}

For reference, we summarize the various order-like relations that we will consider.
\medskip
\begin{center} {\renewcommand\arraystretch{1.2}
\begin{tabular}{lll} \hline
$a\geq_C b$ & Componentwise order & on $\Zz^n$\\
$x\geq_B y$ & Bruhat order &  \rdelim\}{4}{1em}[\ on $\Sym_n$] \\
$x\geq_W y$ & Left weak order\\
$x\reach y$ & Reachability (not transitive)\\
$x\geq_P y$ & Pseudoreachability\\ \hline
\end{tabular} }
\end{center}
\medskip

\section{Reachability via counting fibers of the bioutcome map} \label{sec:reachable}

Fix a pair of permutations $(x,y)\in\Sym_n\x\Sym_n$. How can we determine if $(x,y)$ is reachable?  More generally, what is the number $\phi(x,y)=|\bioutcome^{-1}(x,y)|$ of IPFs $(a,b)$ with bioutcome~$(x,y)$?

We can answer this enumerative question quickly, although the resulting formula is recursive and somewhat opaque.  First, for each $i$, the number of possibilities $\cf_i=\cf_i(x,y)$ for $a(i)$ is the size of the largest block of spaces ending in $x(i)$ that are all occupied by one of the first $i$ cars.
That is,
\[\cf_i=\cf_i(x,y) = \max\left\{ j \in [1, x(i)]: x^{-1}(x(i)-k) \le i \text{ for all } 0 \le k \le j-1 \right\}.\]

Second, given $a(1),\dots,a(i)$, the number of possibilities for $b(i)$ is $\df_i=\df_i(x,y) = \#\DF_i(x,y)$, where
\[\DF_i(x,y)=\{k\in[0,J_i-1]:\ y(i) + k \ge x(i)\}\]
and
\[J_i =\max\{j\in[1,n+1-y(i)]:\ y^{-1}(y(i)+s) \ge i \text{ for all }0 \le s \le j-1\}.\]

The definition of $J_i$ is analogous to that of $\cf_i$: it is the size of the largest block of spaces ending in $n+1-y(i)$ that are all occupied by one of the first $n+1-i$ cars, so it is the number of possible values for $b^*_i$ under which $\outcome(b^*)=y^*$.
The additional condition $y(i)+k\geq x(i)$ in the definition of $\DF_i$ ensures that $(a,b)$ is an IPF because the upper bound on $x(i)$ given by $b(i)$ does not conflict with where the $i^{th}$ car parks under Algorithm~B.

The sequences $\cf=(\cf_1,\dots,\cf_n)$ and $\df=(\df_1,\dots,\df_n)$
then determine the size of the fibers of $\bioutcome$:
\begin{equation}
\phi(x,y)=\left|\bioutcome^{-1}(x,y)\right| = \prod_{i=1}^{n} \cf_i\df_i.
\end{equation}

\begin{example}
Let $x = 361245$ and $y = 341256$. Then $\cf=(1,1,1,2,4,5)$ and $\df=(4,1,2,1,2,1)$, so there are $2^34^25^1=640$ IPF's with bioutcome $(x,y$).
\end{example}

It is clear from the definition that $1\leq\cf_i\leq i$ for all $i$.  On the other hand, one or more $\df_i$ may be zero.  The pair $(x,y)$ is reachable if and only if $\df_i>0$ for all $i$; we refer to this as the \textbf{Count Criterion} for reachability.

Evidently, the largest fiber occurs when $x$ and $y$ both equal the identity permutation in $\Sym_n$.  In this case $\cf=(1,2,\dots,n)$ and $\df=(n,n-1,\dots,1)$, and the fiber size is $(n!)^2$.  At the opposite end of the spectrum, if $x=y=(n,\dots,1)$, then $\phi(x,y) = 1$.

Perhaps a better way to think about reachability is the following criterion. If we are solely interested in reachability and not the number of IPFs that achieve a given outcome, we can rephrase reachability more directly in terms of the permutations $x$ and $y$.

\begin{theorem}[\textbf{Reachability Criterion}]\label{thm:RC}
Let $x,y\in\Sym_n$.  Then
\begin{equation}\label{RC}
x\reach y \quad\iff\quad [y(i),x(i)] \subseteq y[i,n] \quad \forall\, i \in [n].\tag{\textbf{RC}}
\end{equation}
\end{theorem}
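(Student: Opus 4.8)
The plan is to reduce the statement to the Count Criterion established immediately above, which says that $x\reach y$ holds if and only if $\df_i>0$ for every $i\in[n]$. Since both the Count Criterion and the right-hand side of~\eqref{RC} are conjunctions of conditions indexed by $i$, it suffices to fix $i$ and prove the single equivalence
\[
\df_i>0 \quad\iff\quad [y(i),x(i)]\subseteq y[i,n],
\]
after which the full statement follows by intersecting over all $i$.

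First I would reinterpret the quantity $J_i$. By its definition, $J_i$ is the largest $j$ for which the consecutive values $y(i),y(i)+1,\dots,y(i)+j-1$ all occur at positions $\ge i$ in $y$; equivalently, $[y(i),\,y(i)+J_i-1]$ is the longest integer interval starting at $y(i)$ that is contained in $y[i,n]$. Note that $J_i\ge 1$ always, since $y(i)$ itself sits in position $i$, so the constraint for $s=0$ is automatic. With this reading, $\DF_i$ consists of those $k\in[0,J_i-1]$ with $y(i)+k\ge x(i)$, so $\df_i>0$ precisely when some such $k$ exists.

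Next I would run the short case analysis on the sign of $x(i)-y(i)$. If $y(i)\ge x(i)$, then $k=0$ already lies in $\DF_i$, so $\df_i>0$; on the other side $[y(i),x(i)]$ is either empty or the singleton $\{y(i)\}$, and in both cases the containment $[y(i),x(i)]\subseteq y[i,n]$ holds trivially, so the two conditions agree. If instead $y(i)<x(i)$, the smallest admissible $k$ is $k=x(i)-y(i)$, whence $\df_i>0$ if and only if $x(i)-y(i)\le J_i-1$, i.e.\ $x(i)\le y(i)+J_i-1$. By the maximality description of $J_i$ from the previous step, this inequality is equivalent to $[y(i),x(i)]\subseteq[y(i),\,y(i)+J_i-1]\subseteq y[i,n]$; conversely, any containment $[y(i),x(i)]\subseteq y[i,n]$ places all of $y(i),\dots,x(i)$ at positions $\ge i$, forcing $J_i\ge x(i)-y(i)+1$. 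This matches $\df_i>0$ and completes the fixed-$i$ equivalence.

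I do not anticipate a serious obstacle: once the Count Criterion is in hand, the argument is essentially a translation of the defining formula for $\df_i$ into the language of integer intervals. The only point requiring care is the correct reading of $J_i$ as a maximal run length, together with the bookkeeping of the degenerate case $y(i)\ge x(i)$, where $[y(i),x(i)]$ is empty (or a singleton) and the containment holds vacuously; I would double-check against the definitions that this degenerate case lines up with $\df_i>0$ rather than producing a spurious failure.
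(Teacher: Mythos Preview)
Your proposal is correct and follows essentially the same approach as the paper: both reduce to the Count Criterion, fix $i$, and verify $\df_i>0\iff[y(i),x(i)]\subseteq y[i,n]$ by unpacking the definition of $J_i$ as the length of the maximal run $[y(i),y(i)+J_i-1]\subseteq y[i,n]$ and splitting on whether $y(i)\ge x(i)$. The only cosmetic difference is that the paper organizes the argument by direction (contrapositive of $\Leftarrow$, then $\Rightarrow$) rather than by the sign of $x(i)-y(i)$, but the content is the same.
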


\begin{proof}
Let $i\in[n]$.  We will show that $\df_i(x,y)>0$ if and only if $[y(i),x(i)] \subseteq y[i,n]$.

Suppose that $[y(i),x(i)] \setminus y[i,n]\neq \0$.  That is, there is some $m\in[y(i),x(i)]$ such that $y^{-1}(m)<i$. Thus $J_i \leq m-y(i)$, so $y(i)+k<m\leq x(i)$ for all $k<J_i$, so $\df_i(x,y)=0$.

Now assume that $[y(i),x(i)] \subseteq y[i,n]$. We wish to show that $\DF_i\neq\0$. If $y(i)\geq x(i)$, then $0\in\DF_i$.  On the other hand, if $y(i)<x(i)$, then $m=x(i)-y(i)>0$, and for all $0 \leq k \leq m$ we have $y^{-1}(y(i)+k) \ge i$.  Therefore $J_i > m$ and $m\in\DF_i$.
\end{proof}

It is worth emphasizing that the Reachability Criterion is sufficient, but not necessary, for showing that $x\geq_Py$.  For example, the pair $(x,y)=(321,213)$ fails~\eqref{RC} for $i=2$, but nonetheless $x\geq_Py$.
\begin{prop} \label{cf-df-facts}
The sequence $\df(x,y)$ has the following properties.
\begin{enumerate}[label=(\alph{enumi})]
\item\label{dfone} $\df_1\geq1$.
\item\label{dfi} For each $i$, if $y(i)\geq x(i)$, then $\df_i\geq1$.
\item\label{dfn} If $x\geq_By$, then $\df_n=1$.
\end{enumerate}
\end{prop}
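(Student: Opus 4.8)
The three parts are largely independent, and each reduces to unwinding the definitions of $\df_i$, $\DF_i$, and $J_i$; only part~\ref{dfn} requires genuine input from Bruhat order. For part~\ref{dfone}, I would invoke the Reachability Criterion \eqref{RC} in the equivalent form already extracted in the proof of Theorem~\ref{thm:RC}, namely that $\df_i>0$ iff $[y(i),x(i)]\subseteq y[i,n]$. For $i=1$ we have $y[1,n]=[n]$ because $y$ is a permutation, so the containment $[y(1),x(1)]\subseteq[n]$ holds automatically and $\df_1\geq1$. (Equivalently, for $i=1$ the position constraint $y^{-1}(y(1)+s)\geq1$ defining $J_1$ is vacuous, so $J_1=n+1-y(1)$ is as large as possible, and a one-line check gives $\DF_1\neq\0$.)

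For part~\ref{dfi}, I would isolate the observation, already implicit in the proof of Theorem~\ref{thm:RC}, that $J_i\geq1$ for every $i$: the case $j=1$ in the definition of $J_i$ requires only $y^{-1}(y(i))\geq i$, which holds with equality. Consequently $0\in[0,J_i-1]$, so whenever $y(i)\geq x(i)$ the index $k=0$ satisfies $y(i)+0\geq x(i)$ and hence lies in $\DF_i$, giving $\df_i\geq1$.

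Part~\ref{dfn} is where the real work lies, and it splits into two claims. First, I would show that $J_n=1$: the case $j=1$ is always admissible, but $j=2$ would force $y^{-1}(y(n)+1)\geq n$, i.e.\ $y^{-1}(y(n)+1)=n$, contradicting $y^{-1}(y(n))=n$ by injectivity of $y$ (and if $y(n)=n$ then the range $[1,n+1-y(n)]$ admits no $j=2$ at all). Thus $\DF_n\subseteq\{0\}$, so $\df_n\leq1$. Second, I would show $\df_n\geq1$ by establishing that $x\geq_By$ forces $x(n)\leq y(n)$, so that $k=0\in\DF_n$. For this I would feed $i=n-1$ into the Bruhat criterion \eqref{bruhat-criterion}: for any permutation $u$ and any $j$, summing over all $n$ positions gives $u\langle n,j\rangle=n-j+1$, whence $u\langle n-1,j\rangle=(n-j+1)-\chi(u(n)\geq j)$, where $\chi$ denotes the indicator. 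The inequality $y\langle n-1,j\rangle\leq x\langle n-1,j\rangle$ then collapses to $\chi(x(n)\geq j)\leq\chi(y(n)\geq j)$ for all $j$; taking $j=x(n)$ yields $y(n)\geq x(n)$. Combining the two claims gives $\df_n=1$.

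The main obstacle is precisely the Bruhat comparison in part~\ref{dfn}: the upper bound $\df_n\leq1$ is purely combinatorial, but the matching lower bound hinges on the entrywise consequence $x(n)\leq y(n)$ of $x\geq_By$, which is not a definitional triviality and must be pried out of the rank-function description \eqref{bruhat-criterion}--\eqref{angle-brackets} of Bruhat order. The clean cancellation at $i=n-1$, where $u\langle n-1,j\rangle$ differs from the constant $n-j+1$ only through the single entry $u(n)$, is the crux that makes this extraction succeed.
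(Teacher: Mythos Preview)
Your proposal is correct and follows essentially the same route as the paper: parts~\ref{dfone} and~\ref{dfi} are handled via the equivalence $\df_i>0\iff[y(i),x(i)]\subseteq y[i,n]$ (you unwind~\ref{dfi} directly through $J_i\geq1$ and $0\in\DF_i$, which is exactly the case split inside the proof of Theorem~\ref{thm:RC}), and part~\ref{dfn} combines the computation $J_n=1$ with the Bruhat consequence $y(n)\geq x(n)$ extracted from~\eqref{bruhat-criterion} at $i=n-1$. The only cosmetic difference is that you spell out the indicator-function cancellation behind $y(n)\geq x(n)$ in detail, whereas the paper leaves that step as a one-line remark.
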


\begin{proof}  The first two assertions are direct consequences of~\eqref{RC}.  For~\ref{dfone}, we have $[y(1),x(1)] \subseteq [n] = y[n]$, and for~\ref{dfi}, if $y(i)\geq x(i)$ then $[y(i),x(i)]\subseteq\{y(i)\}\subseteq y[i,n]$.

For~\ref{dfn}, if $y\leq_B x$, then $y(n) \ge x(n)$ (a consequence of the inequalities~\eqref{bruhat-criterion} for $i=n-1$ and all $j$), so $\df_n>0$ by part~\ref{dfi}.  Observe that
\[J_n =\max\{j:\ y(n)+k\leq n \text{ and } y^{-1}(y(n)+k) \ge n \text{ for all }0 \le k \le j-1\} = 1\]
because the conditions are true for $k=0$ but false for $k>0$.  Therefore, $\DF_n=\{k\in[0,0]:\ y(n)\geq x(n)\}=\{0\}$ and $\df_n=\#\DF_n=1$.
\end{proof}

\section{Pseudoreachability order is graded} \label{sec:pseudoreach-order}

In this section, we prove that the pseudoreachability order $\geq_P$ on $\Sym_n$ is graded by length, just like the Bruhat and weak orders.

Temporarily, we will use the notation $x\rcov y$ to mean that $x\reach y$ and $\inv(x) = \inv(y) + 1$.  Note that if $x\rcov y$ then $x\gtrdot_Py$ (because $x\gtrdot_By$).  Our goal is to prove the converse of the last statement, which will imply that pseudoreachability is graded by length.

We have already shown that pseudoreachability order is no stronger than Bruhat order $\geq_B$.  We next show that it is no weaker than left weak order $\geq_W$.

\begin{prop} \label{lem:inv-1}
If $x\gtrdot_Wy$, then $x\rcov y$.
\end{prop}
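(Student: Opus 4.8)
The plan is to unpack the covering relation into concrete positional data and then verify the Reachability Criterion (Theorem~\ref{thm:RC}) directly. By the standard description of covers in left weak order, $x \gtrdot_W y$ means that $y = s_i x$ for some standard generator $s_i$ with $\ell(x) = \ell(y)+1$. This last equation already supplies the length condition $\inv(x) = \inv(y)+1$ required for $x \rcov y$, so all that remains is to prove reachability, $x \reach y$.

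Next I would translate the algebraic relation into combinatorics. Since left multiplication by $s_i = t_{i,i+1}$ interchanges the digits $i$ and $i+1$ wherever they occur, $y$ is obtained from $x$ by swapping the values $i$ and $i+1$; and the constraint $\ell(y) < \ell(x)$ forces $i+1$ to appear to the \emph{left} of $i$ in $x$. Setting $q = x^{-1}(i+1)$ and $p = x^{-1}(i)$, this says $q < p$, and $x$ and $y$ agree in every position except that $x(q) = i+1,\ x(p) = i$ while $y(q) = i,\ y(p) = i+1$.

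The heart of the argument is to check $[y(j),x(j)] \subseteq y[j,n]$ for every $j$, which I would do in three cases. For $j \notin \{p,q\}$ we have $x(j) = y(j)$, so the interval is the singleton $\{y(j)\}$, trivially contained in $y[j,n]$. At position $p$ the interval $[y(p),x(p)] = [i+1,i]$ is empty, so the inclusion holds vacuously. The only substantive case is $j = q$, where $[y(q),x(q)] = [i,i+1] = \{i,i+1\}$; here $i = y(q) \in y[q,n]$, and since $p > q$ also $i+1 = y(p) \in y[q,n]$, so $\{i,i+1\} \subseteq y[q,n]$. Thus \eqref{RC} holds at every position, giving $x \reach y$ and hence $x \rcov y$.

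I expect the only genuine obstacle to be bookkeeping rather than mathematics: one must keep straight the several conventions in play, namely the direction of the cover in left weak order, the left-multiplication digit-swap reading of $s_i x$, and the sign of the length change, since reversing any of them would place the swapped values in the wrong relative order and break the inclusion at position $q$. Once the positions are correctly pinned down with $q < p$, the Reachability Criterion check is immediate.
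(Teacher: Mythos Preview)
Your proof is correct and follows essentially the same approach as the paper: both verify the Reachability Criterion~\eqref{RC} position by position, noting that $x$ and $y$ agree except at the two positions where the values $i$ and $i+1$ are swapped. The only cosmetic difference is that the paper bundles the trivial positions (where $y(j)\geq x(j)$) into a single appeal to Prop.~\ref{cf-df-facts}\ref{dfi}, whereas you split them into the cases $j\notin\{p,q\}$ and $j=p$ explicitly.
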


\begin{proof}
Suppose that $x\gtrdot_Wy$, i.e., that $x=s_ay$, where $j=y^{-1}(a)<y^{-1}(a+1)=k$.  Then Prop.~\ref{cf-df-facts}\ref{dfi} implies that $\df_i(x,y)>0$ for all $i\in[n]\setminus\{j\}$.  Meanwhile $[y(j),x(j)]=\{a,a+1\}=\{y(j),y(k)\}\subseteq[y(j),y(n)]$, so~\eqref{RC} implies that $\df_j(x,y)>0$ as well.
\end{proof}

For each $x\in\Sym_n$, let $\hat x$ be the permutation in $\Sym_{n-1}$ defined by
\begin{equation} \label{hats-on}
\hat x(i) = \begin{cases} x(i) & \text{ if } x(i)<x(n),\\ x(i)-1 & \text{ if } x(i)>x(n).\end{cases}
\end{equation}

\begin{lemma} \label{lem:proj}
Let $x,y \in \Sym_n$ with $x(n) = y(n)$.
Then $x\reach y$ if and only if $\hat x\reach\hat y$.
\end{lemma}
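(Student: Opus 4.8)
The plan is to reduce everything to the Reachability Criterion (Theorem~\ref{thm:RC}) and then track how the condition~\eqref{RC} transforms under the standardization~\eqref{hats-on}. Write $c=x(n)=y(n)$, and let $\sigma\colon[n]\setminus\{c\}\to[n-1]$ be the map $\sigma(v)=v$ if $v<c$ and $\sigma(v)=v-1$ if $v>c$. This $\sigma$ is an order-preserving bijection, and by~\eqref{hats-on} we have $\hat x(i)=\sigma(x(i))$ and $\hat y(i)=\sigma(y(i))$ for every $i\in[n-1]$ (since $x(i),y(i)\ne c$ there). The first easy observation is that the $i=n$ instance of~\eqref{RC} for $(x,y)$ is automatic, because $[y(n),x(n)]=\{c\}=\{y(n)\}=y[n,n]$. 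Since $\hat x,\hat y\in\Sym_{n-1}$, the criterion~\eqref{RC} for $\hat x\reach\hat y$ involves only the indices $i\in[n-1]$, so it suffices to prove, for each $i\in[n-1]$, the equivalence of the two containments
\[ [y(i),x(i)]\subseteq y[i,n] \quad\Longleftrightarrow\quad [\hat y(i),\hat x(i)]\subseteq\hat y[i,n-1]. \]

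To establish this equivalence I would use two structural identities: since $y(n)=c$ we have $y[i,n]=y[i,n-1]\cup\{c\}$, and since $\hat y(j)=\sigma(y(j))$ for $j\le n-1$ we have $\hat y[i,n-1]=\sigma\big(y[i,n-1]\big)$. Because $\sigma$ is an order-preserving bijection, it carries $[y(i),x(i)]\setminus\{c\}$ bijectively onto $[\hat y(i),\hat x(i)]$, with an empty interval on one side matching an empty interval on the other. The two directions then go as follows. For $\Rightarrow$, take $w\in[\hat y(i),\hat x(i)]$ and set $v=\sigma^{-1}(w)\in[y(i),x(i)]$ (so $v\ne c$); the hypothesis gives $v\in y[i,n]$, and since $v\ne c=y(n)$ we get $v\in y[i,n-1]$, whence $w=\sigma(v)\in\hat y[i,n-1]$. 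For $\Leftarrow$, take $v\in[y(i),x(i)]$; if $v=c$ then $v=y(n)\in y[i,n]$ directly, and if $v\ne c$ then $\sigma(v)\in[\hat y(i),\hat x(i)]\subseteq\hat y[i,n-1]=\sigma(y[i,n-1])$, so injectivity of $\sigma$ yields $v\in y[i,n-1]\subseteq y[i,n]$. Combining this equivalence over all $i\in[n-1]$ with the trivial $i=n$ case gives $x\reach y\iff\hat x\reach\hat y$.

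The only genuinely delicate point, and the one I would flag as the main obstacle, is the value $c$ itself, which is exactly what the standardization deletes: $c$ may lie inside the interval $[y(i),x(i)]$ and must be accounted for on the $(x,y)$ side, where it is always supplied by position $n$ (as $y(n)=c$), while it is invisible on the $(\hat x,\hat y)$ side. Keeping straight that $\sigma$ is a bijection onto $[n-1]$, so that memberships transport cleanly in both directions, and that order-preservation makes empty intervals correspond to empty intervals, is the crux of the argument; everything else is routine bookkeeping.
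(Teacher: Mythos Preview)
Your proof is correct and follows essentially the same approach as the paper's: both reduce to the Reachability Criterion and verify, for each index $i\in[n-1]$, the equivalence of the containments $[y(i),x(i)]\subseteq y[i,n]$ and $[\hat y(i),\hat x(i)]\subseteq\hat y[i,n-1]$. Your order-preserving bijection $\sigma$ cleanly packages what the paper does by explicit case analysis on whether the element under consideration lies below or above $c=y(n)$, but the underlying argument is the same.
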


\begin{proof}
By~\eqref{RC}, the proof reduces to showing that
\begin{subequations}
\begin{equation} \label{reach-xy}
[y(i),x(i)] \subseteq y[i,n] \qquad \forall i\in[n]
\end{equation}
if and only if
\begin{equation} \label{reach-proj}
[\hat y(i),\hat x(i)] \subseteq \hat y[i,n] \qquad \forall i\in[n-1].
\end{equation}
\end{subequations}

($\implies$) Assume that~\eqref{reach-xy} holds.  Let $i\in[n-1]$ and $a \in [\hat y(i),\hat x(i)]$.  There are two cases to consider.

\textit{Case 1a}:  $a < y(n)$. Then %$\hat y(i)=y(i)$ (because $y(i)\leq a<y(n))$ \MY{No need for $y(i) \leq a$ here. Actually, you only know it after realizing $\hat y(i)=y(i)$.}
$\hat y(i)\leq a<y(n)$, so $\hat y(i)=y(i)$ (since~\eqref{hats-on} implies that if $\hat y(i)=y(i)-1$ then $\hat y(i)\geq y(n)$).
Thus
\[ [\hat y(i),a] = [y(i),a] \subseteq [y(i),x(i)] \subseteq y[i,n] \]
because $a\leq\hat x(i)\leq x(i)$, and by~\eqref{reach-xy}.
Therefore $a = y(k)=\hat y(k)$ for some $k\in[i,n-1]$.

\textit{Case 1b}: $a \geq y(n)$. Then, since $\hat y(i) \geq y(i) - 1$ and $x(i) \geq \hat x(i) \geq y(n)$, $a \in [\hat y(i),\hat x(i)]$ implies that $a \in [y(i)-1,x(i)-1]$, i.e., $y(i) \leq a+1 \leq x(i)$.
By~\eqref{reach-xy} there is some $k\in[i,n]$ such that $a+1 = y(k)$.  In fact $k\neq n$ (since $a+1>y(n)$), so $\hat y(k) = y(k) - 1 = a$ and so $a \in \hat y[i,n-1]$.

In both cases we have proved~\eqref{reach-proj}.
\medskip

($\impliedby$) Assume that~\eqref{reach-proj} holds.  It is immediate that~\eqref{reach-xy} holds when $i=n$, so fix $i\in[n-1]$ and $a \in [y(i),x(i)]$.  We wish to show that $a=y(k)$ for some $k\in[i,n]$.  This is clear if $a=y(n)$, so assume $a\neq y(n)$.

\textit{Case 2a}:  $a < y(n)$. Since $a\in[y(i),x(i)]$, either $a = x(i)$ or $a < x(i)$. If $a = x(i)$, then $a = x(i) = \hat x(i)$. If $a < x(i)$, then $a \leq \hat x(i)$ since $\hat x(i) \geq x(i) - 1$. In either case,
\[ [y(i),a] = [\hat y(i),a] \subseteq [\hat y(i),\hat x(i)] \subseteq \hat y[i,n-1]. \]
Thus $a=\hat y(k)=y(k)$ for some $k\in[i,n-1]$.

\textit{Case 2b}: $a > y(n)$.  Since $a\in[y(i),x(i)]$, either $a = y(i)$ or $a > y(i)$. If $a = y(i)$, then $a - 1 = y(i) - 1 = \hat y(i)$ since $y(i) > y(n)$. If $a > y(i)$, then we know that $a-1 \geq \hat y(i)$ since $y(i) \geq \hat y(i)$. It follows that $a-1\in[\hat y(i),\hat x(i)]$, so, by~\eqref{reach-proj}, there is some $k\in[i,n-1]$ such that $a-1 = \hat y(k)\geq y(n)$. Therefore, $a = y(k)$.

In both cases we have proved~\eqref{reach-xy}.
\end{proof}

\begin{corollary} \label{cor:rcover}
Let $x,y \in \Sym_n$ with $x(n) = y(n)$.
Then $x\rcov y$ if and only if $\hat x\rcov\hat y$.
\end{corollary}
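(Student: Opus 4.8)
The reachability half of the statement is exactly Lemma~\ref{lem:proj}, so by the definition of $\rcov$ it remains only to show that, under the hypothesis $x(n)=y(n)$, the length condition $\ell(x)=\ell(y)+1$ is equivalent to $\ell(\hat x)=\ell(\hat y)+1$. The plan is therefore to track precisely how the hat operation~\eqref{hats-on} changes the number of inversions, and then to exploit the hypothesis $x(n)=y(n)$.

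First I would record the identity
\[
\ell(x) = \ell(\hat x) + \big(n - x(n)\big)
\]
for every $x\in\Sym_n$. To see this, one separates the inversions counted by~\eqref{length-inv} into those involving the last position and those that do not. An inversion involving position $n$ is a pair $(i,n)$ with $x(i)>x(n)$; since the value $x(n)$ sits in the last position, all $n-x(n)$ values exceeding $x(n)$ occur among the first $n-1$ positions, so there are exactly $n-x(n)$ such inversions. The remaining inversions involve only positions in $[n-1]$, and on those positions the hat operation is an order-preserving relabeling of the values $x[1,n-1]$ onto $[n-1]$; hence it preserves the relative order of any two of them and so fixes the count of these inversions, which is precisely $\ell(\hat x)$. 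This gives the displayed identity, and likewise $\ell(y)=\ell(\hat y)+\big(n-y(n)\big)$.

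Subtracting the two identities and using $x(n)=y(n)$ yields $\ell(x)-\ell(y)=\ell(\hat x)-\ell(\hat y)$. Consequently $\ell(x)=\ell(y)+1$ holds if and only if $\ell(\hat x)=\ell(\hat y)+1$. Combining this length equivalence with the reachability equivalence of Lemma~\ref{lem:proj} gives $x\rcov y\iff\hat x\rcov\hat y$, as required.

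Because the reachability direction is already supplied by the preceding lemma and the relabeling is manifestly order-preserving, I do not expect a genuine obstacle here; the only point demanding care is the inversion bookkeeping, and in particular the observation that the hypothesis $x(n)=y(n)$ is exactly what makes the two correction terms $n-x(n)$ and $n-y(n)$ cancel, so that the covering condition transfers verbatim.
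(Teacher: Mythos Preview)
Your argument is correct and is essentially identical to the paper's: both derive the identity $\ell(\hat x)=\ell(x)-(n-x(n))$ (the paper records it as~\eqref{hat-inv}) and combine it with Lemma~\ref{lem:proj} and the hypothesis $x(n)=y(n)$. The only difference is that you spell out the inversion bookkeeping in detail, whereas the paper simply asserts the identity.
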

\begin{proof}
The definition of $\hat x$ implies that
\begin{equation} \label{hat-inv}
\inv(\hat x) = \inv(x)-(n-x(n)),
\end{equation}
which together with Lemma~\ref{lem:proj} produces the desired result.
\end{proof}

\begin{prop} \label{reachable-graded}
Let $x,y\in\Sym_n$ such that $x\reach y$, and let $m=\inv(x)-\inv(y)$.  Then there exists a chain
\begin{equation} \label{desired-chain}
x_0=y\rcovby x_1\rcovby\cdots\rcovby x_m=x.
\end{equation}
\end{prop}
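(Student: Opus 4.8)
The plan is to prove the statement by induction on $n$, with an inner induction on $m=\inv(x)-\inv(y)$. Both base cases are immediate: $n=1$ is trivial, and $m=0$ forces $x=y$, since $x\reach y$ implies $x\geq_By$ by Theorem~\ref{thm:bruhat} and Bruhat order is graded by length. For the inductive step I would split on whether $x(n)=y(n)$, handling the equal case by descending to $\Sym_{n-1}$ and the unequal case by peeling a single cover off the bottom of the chain.

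When $x(n)=y(n)$ I would pass to $\Sym_{n-1}$. Lemma~\ref{lem:proj} gives $\hat x\reach\hat y$, and~\eqref{hat-inv} shows (using $x(n)=y(n)$) that $\inv(\hat x)-\inv(\hat y)=m$, so the inductive hypothesis on $n$ supplies a chain $\hat y\rcovby\cdots\rcovby\hat x$ in $\Sym_{n-1}$. Reinserting the common value $x(n)=y(n)$ in the last position is the inverse of the hat operation, and carries this chain back to a sequence $y=x_0,\dots,x_m=x$ in $\Sym_n$ with $x_k(n)=y(n)$ throughout; Corollary~\ref{cor:rcover} then certifies that each $x_{k-1}\rcovby x_k$, producing the desired chain.

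When $x(n)\neq y(n)$, Theorem~\ref{thm:bruhat} gives $x\geq_By$ and hence $x(n)\leq y(n)$ (from the Bruhat inequalities~\eqref{bruhat-criterion} at $i=n-1$), so in fact $x(n)<y(n)$. I would then peel off one cover at the bottom: set $a=y(n)-1$ and let $z=s_ay$ be obtained from $y$ by interchanging the values $a$ and $a+1=y(n)$. Since $y(n)-1$ sits in a position other than $n$, it lies to the left of $y(n)$, so $a$ is a weak ascent of $y$; thus $z\gtrdot_Wy$, and Proposition~\ref{lem:inv-1} gives $y\rcovby z$ with $\inv(z)=\inv(y)+1$. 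Granting that $x\reach z$, we have $\inv(x)-\inv(z)=m-1$, so the inductive hypothesis on $m$ yields a chain $z\rcovby\cdots\rcovby x$, which we prepend with $y\rcovby z$.

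The one genuine computation, and the step I expect to be the main obstacle, is verifying $x\reach z$ via the criterion~\eqref{RC}, namely $[z(i),x(i)]\subseteq z[i,n]$ for all $i$. Passing from $y$ to $z$ alters only positions $p:=y^{-1}(a)$ and $n$, where $a$ and $a+1$ are swapped. For $i\leq p$ one has $z[i,n]=y[i,n]$ and $[z(i),x(i)]\subseteq[y(i),x(i)]$, so~\eqref{RC} for $(x,y)$ is inherited. For $p<i<n$ we have $z(i)=y(i)$, while $z[i,n]$ equals $y[i,n]$ with $y(n)$ deleted and $y(n)-1$ adjoined; the deletion is harmless because $a=y(n)-1\notin y[i,n]$ forces $a\notin[y(i),x(i)]$ by reachability of $y$, and then $y(i)\neq y(n)$ forces $y(n)=a+1\notin[y(i),x(i)]$ as well. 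The remaining case $i=n$ is immediate from $x(n)<y(n)$. Assembling these cases establishes~\eqref{RC} for $(x,z)$, and the two inductions then close.
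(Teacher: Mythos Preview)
Your proof is correct and follows essentially the same approach as the paper's: the same double induction on $n$ and $m$, the same reduction via Lemma~\ref{lem:proj} and Corollary~\ref{cor:rcover} when $x(n)=y(n)$, and the same intermediate permutation $z=s_{y(n)-1}y$ together with the same three-case verification of~\eqref{RC} for $(x,z)$ when $x(n)\neq y(n)$. Your justification that $y(n)\notin[y(i),x(i)]$ in the middle case (from $y(n)-1\notin[y(i),x(i)]$ and $y(i)\neq y(n)$) matches the paper's argument exactly.
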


\begin{proof}
The proof proceeds by double induction on $n$ and $m$.  The conclusion is trivial when $n\leq2$ or $m\leq1$.  Accordingly, let $n>2$ and $m>1$, and assume inductively that the theorem holds for all $(n',m')<_C(n,m)$.

First, suppose that $x(n) = y(n)$.  Then $\hat{x}\reach\hat{y}$ by Lemma~\ref{lem:proj} where $\hat x,\hat y$ are defined by~\eqref{hats-on}.  Moreover,
$\inv(\hat{x}) - \inv(\hat{y}) = \inv(x) - \inv(y) = m$ by~\eqref{hat-inv}. Therefore, by the induction hypothesis, there is a chain
$\hat{y} = \hat x_0 \rcovby \hat x_1 \rcovby \cdots \rcovby \hat x_m = \hat{x}$
in $\Sym_{n-1}$,
which by Corollary~\ref{cor:rcover} can be lifted to a chain of the form~\eqref{desired-chain}.

Second, suppose that $x(n) \neq y(n)$. Since $x\geq_By$ by Theorem \ref{thm:bruhat}, in fact $x(n) < y(n)$ (as noted in the proof of Prop.~\ref{cf-df-facts}\ref{dfn}).
Let $p=y(n)-1$; then $p\in[1,n-1]$, so we may set $q=y^{-1}(p)$ and $z=s_py=y t_{q,n}$.
Then $z\gtrdot_Wy$ and so $z\rcov y$ by Prop.~\ref{lem:inv-1}.  We will show that $x\reach z$ using~\eqref{RC}.

\textit{Case 1}: $1\leq i\leq q$.  Then $[z(i),x(i)]\subseteq[y[i],x(i)]$ and $y[i,n]=z[i,n]$, so $\df_n(x,y)\geq1$ implies $\df_n(x,z)\geq1$.

\textit{Case 2}: $q<i<n$.
Then $p=y(q)\not\in y[i,n]$, so by~\eqref{RC} $p\not\in[y(i),x(i)]$.  Thus $p+1\not\in[y(i)+1,x(i)+1]$,
and certainly $p+1=y(n)\neq y(i)$.  Thus $[y(i),x(i)] \subseteq y[i,n]\setminus\{y(n)\}=y[i,n-1]$ and
\[[z(i),x(i)] = [y(i),x(i)] \subseteq y[i,n-1]=z[i,n-1]\subseteq z[i,n]\]
so again $\df_n(x,z)\geq1$.

\textit{Case 3}: $i=n$.  Then $x(n) \leq  y(n)-1 =z(n)$, so $\df_{n}(x,z) \geq 1$ by Prop.~\ref{cf-df-facts}\ref{dfi}.

Taken together, the three cases imply $x\reach z$.  By induction there is a chain $x_1=z\rcovby\cdots\rcovby x_m=x$, and appending $x_0=y$ produces a chain of the form~\eqref{desired-chain}.
\end{proof}

\begin{theorem} \label{pseudoreachable-graded}
Pseudoreachability order is graded by length.
\end{theorem}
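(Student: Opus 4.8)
The plan is to pin down the covering relations of $\geq_P$ and show that each of them changes $\inv$ by exactly one. Since $x>_Py$ implies $x>_By$ (Definition~\ref{def-pseu}) and hence $\inv(x)>\inv(y)$, the length function $\inv$ is strictly order-preserving for $\geq_P$; once I know in addition that every cover raises $\inv$ by exactly one, $\inv$ will serve as a rank function and exhibit $\geq_P$ as graded by length. Concretely, I would prove the converse of the observation recorded at the start of this section: that $x\gtrdot_Py$ implies $x\rcov y$, i.e. $\inv(x)=\inv(y)+1$.

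So suppose $x\gtrdot_Py$. By the definition of pseudoreachability there is a sequence $x=w_0\reach w_1\reach\cdots\reach w_k=y$; discarding any trivial (repeated) steps, I may take the $w_i$ distinct, so that $w_i>_Bw_{i+1}$ by Theorem~\ref{thm:bruhat} and in particular $\inv(w_i)>\inv(w_{i+1})$. Applying Proposition~\ref{reachable-graded} to each link $w_i\reach w_{i+1}$ refines it into a saturated chain
\[
w_{i+1}\rcovby u^{(i)}_{1}\rcovby\cdots\rcovby w_i
\]
in which every step is a $\rcov$ relation, and each such step is a cover for $\geq_P$ (as noted at the start of the section, $u\rcov v$ implies $u\gtrdot_Pv$). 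Concatenating these chains over $i=0,\dots,k-1$ yields a single saturated $\geq_P$-chain ascending from $y$ to $x$, whose length telescopes to $\sum_i\bigl(\inv(w_i)-\inv(w_{i+1})\bigr)=\inv(x)-\inv(y)$.

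Finally, since $x\gtrdot_Py$ is a covering relation, this chain cannot pass through any element strictly between $y$ and $x$ in $\geq_P$; hence it has a single step, forcing $\inv(x)-\inv(y)=1$. Thus every $\geq_P$-cover raises $\inv$ by exactly one, and together with the strict monotonicity of $\inv$ this makes $\inv$ a rank function, so $\geq_P$ is graded by length.

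The substantive work has already been packaged into Proposition~\ref{reachable-graded}, which manufactures a saturated chain of unit-length covers out of an arbitrary reachable pair. The only remaining point, and the one I would be most careful about, is purely organizational: verifying that the refined, concatenated chain consists genuinely of $\geq_P$-covers (so that its interior vertices are honest intermediate elements), which is exactly what licenses using the cover hypothesis on $(x,y)$ to collapse the chain to length one.
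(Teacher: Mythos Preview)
Your proof is correct and follows essentially the same approach as the paper: both arguments feed Proposition~\ref{reachable-graded} into the definition of pseudoreachability to refine an arbitrary $\geq_P$-relation into a chain of $\rcov$ steps, then use that each $\rcov$ is already a $\geq_P$-cover to conclude (via the cover/maximality hypothesis) that no further refinement is possible. The paper phrases this at the level of maximal chains rather than individual covers, but the content is the same. One cosmetic point: you reuse the symbol $w_0$ for the first term of your chain, which clashes with the paper's standing notation for the longest permutation; rename it before splicing.
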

\begin{proof}
The definition of pseudoreachability as the transitive closure of reachability order implies that if $x_0<_P\cdots<_Px_m$ is a maximal chain, then in fact each $x_{i-1}\reachby x_i$ for all $i$.  Now, maximality together with Prop.~\ref{reachable-graded} implies in turn that in fact $x_{i-1}\rcovby x_i$.
\end{proof}

For comparison, the Hasse diagrams of Bruhat, pseudoreachability, and left weak orders on $\Sym_3$ are shown in Figure~\ref{fig:s3}, together with the
reachability relation (which is reflexive and antisymmetric, but not transitive).  The three partial orders on $\Sym_4$  are shown in Figure~\ref{fig:s4}.

\begin{figure}
\begin{center}
\begin{tikzpicture}
\newcommand{\hspacing}{4.5}
\begin{scope}[shift={(0,0)}]
	\draw[black] (0,0)--(-1,1)--(-1,2)--(0,3)--(1,2)--(1,1)--cycle;
	\draw[black] (-1,1)--(1,2) (-1,2)--(1,1);
	\foreach \x/\y/\w in {0/0/123, -1/1/132, 1/1/213, -1/2/231, 1/2/312, 0/3/321} \node[fill=white] at (\x,\y) {\sf\w};
	\node at (0,-.5) {Bruhat order $\geq_B$};
\end{scope}
\begin{scope}[shift={(\hspacing,0)}]
	\draw[black] (0,0)--(-1,1)--(-1,2)--(0,3)--(1,2)--(1,1)--cycle;
	\draw[black] (-1,1)--(1,2);
	\foreach \x/\y/\w in {0/0/123, -1/1/132, 1/1/213, -1/2/231, 1/2/312, 0/3/321} \node[fill=white] at (\x,\y) {\sf\w};
	\node at (0,-.5) {Pseudoreachability $\geq_P$};
\end{scope}
\begin{scope}[shift={(2*\hspacing,0)}]
	\draw[black] (0,0)--(-1,1)--(-1,2)--(0,3)--(1,2)--(1,1)--cycle;
	\foreach \x/\y/\w in {0/0/123, -1/1/132, 1/1/213, -1/2/231, 1/2/312, 0/3/321} \node[fill=white] at (\x,\y) {\sf\w};
	\node at (0,-.5) {Left weak order $\geq_W$};
\end{scope}
\begin{scope}[shift={(3*\hspacing,0)}]
	\foreach \p in {(-1,1), (1,1), (-1,2), (1,2), (0,3)} \draw[black] (0,0)--\p;
	\foreach \p in {(-1,2), (1,2), (0,3)} \draw[black] (-1,1)--\p;
	\draw[black] (1,1)--(1,2)--(0,3)--(-1,2);
	\foreach \x/\y/\w in {0/0/123, -1/1/132, 1/1/213, -1/2/231, 1/2/312, 0/3/321} \node[fill=white] at (\x,\y) {\sf\w};
	\node at (0,-.5) {Reachability $\reach$};
\end{scope}
\end{tikzpicture}
\caption{Bruhat, pseudoreachability, left weak order, and reachability on $\Sym_3$\label{fig:s3}}
\end{center}
\end{figure}
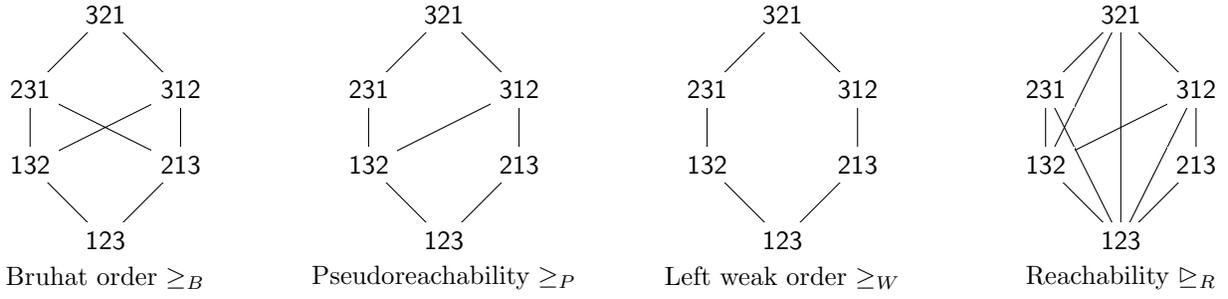

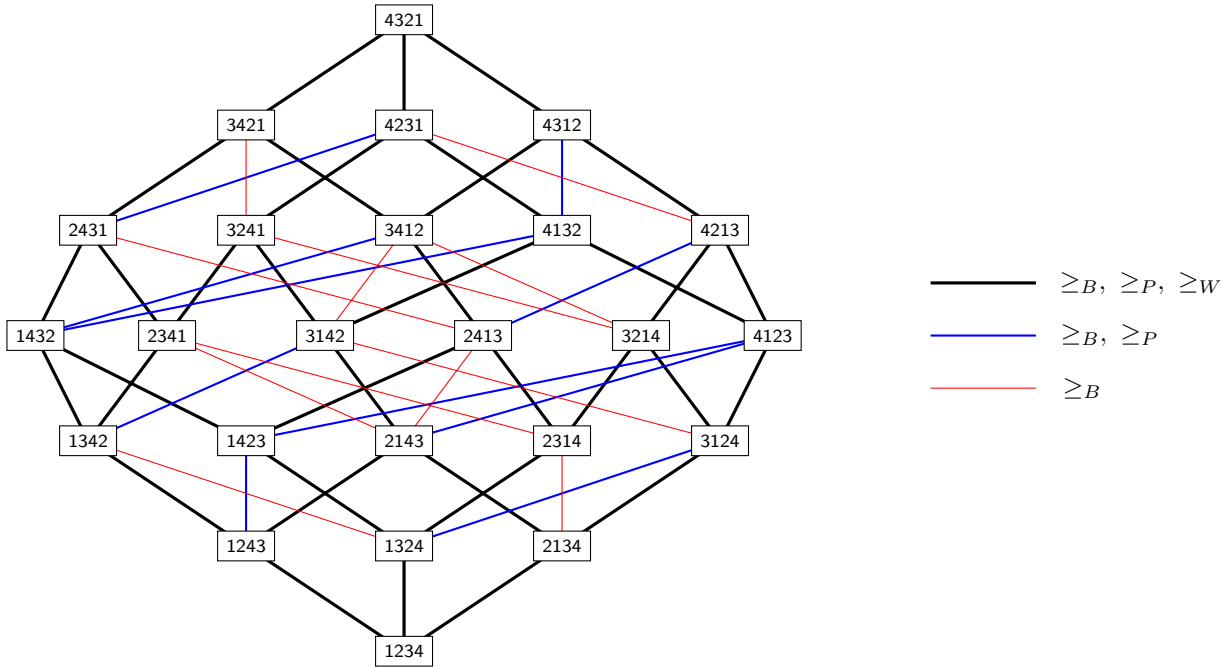
\begin{figure}
\begin{center}
\begin{tikzpicture}[scale=1.4]
															\coordinate (p4321) at (0,6);
									\coordinate (p3421) at (-1.5,5);	\coordinate (p4231) at (0,5);	\coordinate (p4312) at (1.5,5);
			\coordinate (p2431) at (-3,4);	\coordinate (p3241) at (-1.5,4);	\coordinate (p3412) at (0,4);	\coordinate (p4132) at (1.5,4);	\coordinate (p4213) at (3,4);
			\coordinate (p1432) at (-3.5,3);	\coordinate (p2341) at (-2.25,3); \coordinate (p3142) at (-.75,3);	\coordinate (p2413) at (.75,3);	\coordinate (p3214) at (2.25,3);	\coordinate (p4123) at (3.5,3);
			\coordinate (p1342) at (-3,2);	\coordinate (p1423) at (-1.5,2);	\coordinate (p2143) at (0,2);	\coordinate (p2314) at (1.5,2);	\coordinate (p3124) at (3,2);
									\coordinate (p1243) at (-1.5,1);	\coordinate (p1324) at (0,1);	\coordinate (p2134) at (1.5,1);
															\coordinate (p1234) at (0,0);
\draw[very thick] (p1243)--(p1234) (p1324)--(p1234) (p1342)--(p1243) (p1423)--(p1324) (p1432)--(p1423) (p1432)--(p1342) (p2134)--(p1234) (p2143)--(p1243) (p2143)--(p2134) (p2314)--(p1324) (p2341)--(p1342) (p2413)--(p1423) (p2413)--(p2314) (p2431)--(p1432) (p2431)--(p2341) (p3124)--(p2134) (p3142)--(p2143) (p3214)--(p3124) (p3214)--(p2314) (p3241)--(p3142) (p3241)--(p2341) (p3412)--(p2413) (p3421)--(p3412) (p3421)--(p2431) (p4123)--(p3124) (p4132)--(p4123) (p4132)--(p3142) (p4213)--(p4123) (p4213)--(p3214) (p4231)--(p4132) (p4231)--(p3241) (p4312)--(p4213) (p4312)--(p3412) (p4321)--(p4312) (p4321)--(p4231) (p4321)--(p3421);
\draw[thick,blue] (p1243)--(p1423) (p1324)--(p3124) (p1342)--(p3142) (p1423)--(p4123) (p1432)--(p3412) (p1432)--(p4132) (p2143)--(p4123) (p2413)--(p4213) (p2431)--(p4231) (p4132)--(p4312);
\draw[red] (p1324)--(p1342) (p2134)--(p2314) (p2143)--(p2341) (p2143)--(p2413) (p2314)--(p2341) (p2413)--(p2431) (p3124)--(p3142) (p3142)--(p3412) (p3214)--(p3241) (p3214)--(p3412) (p3241)--(p3421) (p4213)--(p4231);
\foreach \name/\loc in {1234/p1234, 1243/p1243, 1324/p1324, 1342/p1342, 1423/p1423, 1432/p1432, 2134/p2134, 2143/p2143, 2314/p2314, 2341/p2341, 2413/p2413, 2431/p2431, 3124/p3124, 3142/p3142, 3214/p3214, 3241/p3241, 3412/p3412, 3421/p3421, 4123/p4123, 4132/p4132, 4213/p4213, 4231/p4231, 4312/p4312, 4321/p4321} \node at (\loc) [rectangle,draw,fill=white] {\sf\scriptsize\name};
\draw[very thick] (5,3.5)--(6,3.5); \node at (7,3.5) {$\geq_B,\ \geq_P,\ \geq_W$};
\draw[thick, blue] (5,3)--(6,3); \node at (6.7,3) {$\geq_B,\ \geq_P$};
\draw[red] (5,2.5)--(6,2.5); \node at (6.44,2.5) {$\geq_B$};
\end{tikzpicture}
\caption{Bruhat, pseudoreachability, and left weak order on $\Sym_4$\label{fig:s4}}
\end{center}
\end{figure}

\section{Pseudoreachability order and bubble-sorting order}\label{sorting}

The theory of normal forms in a Coxeter system was introduced by du~Cloux~\cite{duCloux} and is described in~\cite[\S3.4]{BB}.  We sketch here the facts we will need; see especially~\cite[Example 3.4.3]{BB}, which describes normal forms in the symmetric group in terms of bubble-sorting.  Let $\sigma_k=s_1\cdots s_k$ and $\omega_n=\sigma_{n-1}\cdots\sigma_1$; then $\omega_n$ is a reduced word for $w_0\in\Sym_n$.  Every $x\in\Sym_n$ has a unique \textbf{conormal form}: a reduced word $N(w)$ of the form $v_{n-1} v_{n-2} \cdots v_2 v_1$, where $v_k=s_j s_{j+1}\cdots s_k$ is a suffix of $\sigma_k$.  The conormal form is the reverse of the lexicographically first reduced word for $x^{-1}$ (that is, of the normal form of $x^{-1}$, as described in~\cite{BB}).  Thus $x$ is characterized by the sequence
\[\lambda(x)=(\lambda_{n-1}(x),\dots,\lambda_1(x))=(|v_{n-1}|,\dots,|v_1|)\in[0,n-1]\x[0,n-2]\x\cdots\x[0,1].\]

Armstrong~\cite{Armstrong} defined a general class of \emph{sorting orders} on a Coxeter system $(W,S)$: one fixes $w\in W$ and chooses a reduced word $\omega$ (the ``sorting word'') for $w\in W$, then partially orders all group elements expressible as a subword of~$\omega$ by inclusion between their lexicographically first such expressions.  Armstrong proved that for every reduced word for the top element of a finite Coxeter group, the sorting order is a distributive lattice intermediate between the weak and Bruhat orders.  In the case that $W=\Sym_n$ and $\omega=\omega_n$, the sorting order is equivalent to comparing $\lambda(x)$ and $\lambda(y)$ componentwise, hence is isomorphic to $C_2\x\cdots\x C_n$, where $C_i$ denotes a chain with $i$ elements.

\begin{prop} \label{v-reduction}
Let $x,y\in\Sym_n$ with $x(n)=y(n)=n$, and let $v=s_j s_{j+1} \cdots s_{n-1}$ be a suffix of $s_1 \; \cdots \; s_{n-1}$.
Then $x\reach y$ if and only $vx\reach vy$.
\end{prop}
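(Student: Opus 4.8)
The plan is to argue entirely through the Reachability Criterion~\eqref{RC}, tracking how left-multiplication by $v$ transforms the intervals $[y(i),x(i)]$ and the sets $y[i,n]$. First I would record the action of $v$ on digits: since multiplication is right-to-left and $v=s_j\cdots s_{n-1}$, as a function $v$ fixes each $m\in[1,j-1]$, sends $m\mapsto m+1$ for $m\in[j,n-1]$, and sends $n\mapsto j$. Because left-multiplication transposes digits, $(vx)(i)=v(x(i))$ and $(vy)(i)=v(y(i))$; write $x'=vx$ and $y'=vy$. Two consequences of the hypothesis $x(n)=y(n)=n$ are crucial. First, $x'(n)=y'(n)=v(n)=j$, and for every $i<n$ both $x(i)$ and $y(i)$ lie in $[1,n-1]$. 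Second, $n\in y[i,n]$ for every $i$ (as $y(n)=n$), whence $j=v(n)\in v(y[i,n])=y'[i,n]$ for every $i$.

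By~\eqref{RC} it suffices to prove, for each fixed $i\in[n]$, that $[y(i),x(i)]\subseteq y[i,n]$ if and only if $[y'(i),x'(i)]\subseteq y'[i,n]$; the full biconditional then follows by quantifying over $i$. The index $i=n$ is immediate, since the two conditions read $\{n\}\subseteq y[n,n]$ and $\{j\}\subseteq y'[n,n]$, both of which hold. So I would fix $i<n$. The key observation is that $v$ restricted to $[1,n-1]$ is \emph{strictly order-preserving}, with image $[1,n]\setminus\{j\}$; consequently $v$ carries the interval $[y(i),x(i)]$ (which lives in $[1,n-1]$) to $[y'(i),x'(i)]\setminus\{j\}$. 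Since $v$ is a bijection of $[n]$ and $y'[i,n]=v(y[i,n])$, we get $v\big([y(i),x(i)]\big)\subseteq y'[i,n]$ if and only if $[y(i),x(i)]\subseteq y[i,n]$.

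It then remains to absorb the missing point $j$, and this is where the second consequence above does the work: because $j\in y'[i,n]$ unconditionally, the containment $[y'(i),x'(i)]\subseteq y'[i,n]$ is unaffected by whether $j$ is included, i.e.\ it is equivalent to $[y'(i),x'(i)]\setminus\{j\}\subseteq y'[i,n]$, which is exactly $v\big([y(i),x(i)]\big)\subseteq y'[i,n]$. Chaining these equivalences yields~\eqref{RC} at index $i$ for $(x,y)$ precisely when it holds at index $i$ for $(x',y')$, finishing the proof. The one genuine subtlety—and the main obstacle—is that $v$ is \emph{not} globally order-preserving: it wraps $n$ around to $j$, so the image of an interval is an interval with the single point $j$ possibly deleted, and $[y'(i),x'(i)]$ need not equal $v\big([y(i),x(i)]\big)$. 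The whole argument turns on noticing that the deleted point $j$ is automatically present in $y'[i,n]$ thanks to $y(n)=n$, so the discrepancy never affects the containment. A minor check worth folding in is the degenerate case $y(i)>x(i)$: there the interval is empty, and order-preservation of $v$ on $[1,n-1]$ guarantees $y'(i)>x'(i)$ as well, so both sides remain vacuously true.
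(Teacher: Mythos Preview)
Your proof is correct and follows the same overall route as the paper: verify the Reachability Criterion index by index, tracking how left-multiplication by $v$ transforms the interval $[y(i),x(i)]$ and the set $y[i,n]$. The paper carries this out via a six-case analysis (three cases for each direction, according to whether $j>x(i)$, $y(i)<j\le x(i)$, or $j\le y(i)$, and dually for $vx,vy$). Your argument packages all of this into the single structural observation that $v|_{[1,n-1]}$ is strictly order-preserving with image $[1,n]\setminus\{j\}$, so $v\big([y(i),x(i)]\big)=[y'(i),x'(i)]\setminus\{j\}$, together with the fact that $j=y'(n)\in y'[i,n]$ always. This is a genuine streamlining: it replaces the case split by one uniform statement and makes the role of the hypothesis $y(n)=n$ completely transparent. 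One tiny omission: you should note (as the paper does) that the empty suffix $v=e$ is trivial, since your argument uses $j$ throughout.
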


\begin{proof}
If $v=e$, there is nothing to prove.  Otherwise, by~\eqref{RC}, it suffices to show that for every $i\in[n]$, we have
\begin{subequations}
\begin{equation} \label{RC-for-xy}
[y(i),x(i)]\subseteq y[i,n]
\end{equation}
if and only if
\begin{equation} \label{RC-for-v}
[(vy)(i),(vx)(i)]\subseteq vy[i,n].
\end{equation}
\end{subequations}
This is clear if $i=n$, so we assume henceforth that $i\neq n$.  Moreover,
\[v(k)=\begin{cases}
k &\text{ if } k<j,\\
k+1 &\text{ if } j\leq k<n,\\
j &\text{ if } k=n\end{cases}
\qquad\text{and}\qquad
v^{-1}(k)=\begin{cases}
k &\text{ if } k<j,\\
n &\text{ if } k=j,\\
k-1 &\text{ if } k>j.\end{cases}
\]
In particular, if $i\neq n$, then $x(i)>y(i)$ if and only if $v(x(i))>v(y(i))$.  We assume henceforth that these two equivalent conditions hold, since if both fail then~\eqref{RC-for-xy} and~\eqref{RC-for-v} are both trivially true.
The proofs of the two directions now proceed very similarly.
\medskip

$\eqref{RC-for-xy}\implies\eqref{RC-for-v}$:\quad There are three cases.

\emph{Case 1a: $j>x(i)$.}  Then $v$ fixes $[1,x(i)]$ pointwise, so $[(vy)(i),(vx)(i)]=v[y(i),x(i)]\subseteq vy[i,n]$ (applying $v$ to both sides of~\eqref{RC-for-xy}).

\emph{Case 1b: $y(i) < j \leq x(i)$.}
Then $(v x)(i) = x(i) + 1$ and $(v y)(i) = y(i)$, so
\begin{align*}
[(vy)(i),(vx)(i)] &= [y(i),j-1]\cup\{j\}\cup[j+1,x(i)+1]\\
&= v[y(i),j-1]\cup\{v(n)\}\cup v[j,x(i)]\\
&= v\left( [y(i),x(i)]\cup \{y(n)\}\right)\\
&\subseteq vy[i,n]
\end{align*}
establishing~\eqref{RC-for-v}.

\emph{Case 1c: $j \leq y(i)$.}  Similarly to Case~1a, we have $[(vy)(i),(vx)(i)] = [y(i)+1,x(i)+1] = v[y(i),x(i)]\subseteq vy[i,n]$, as desired.

$\eqref{RC-for-v}\implies\eqref{RC-for-xy}$:\quad
Applying $v^{-1}$ to both sides of~\eqref{RC-for-v} gives
$v^{-1}[vy(i),vx(i)]\subseteq y[i,n]$,
so in order to prove~\eqref{RC-for-xy} It is enough to show that
\begin{equation} \label{enough}
[y(i),x(i)]\subseteq v^{-1}[vy(i),vx(i)]
\end{equation}

Moreover, the earlier assumption $i\neq n$ implies that $vx(i)\neq j$ and $vy(i)\neq j$.

\emph{Case 2a: $j > vx(i)$.} Then $v^{-1}$ fixes the set $[1,vx(i)]$ pointwise, so in particular $[y(i),x(i)] = [vy(i),vx(i)] = v^{-1}[vy(i),vx(i)]$, establishing~\eqref{enough}.

\emph{Case 2b: $vy(i) < j < vx(i)$.} Then $y(i) = vy(i)$ and $x(i) = vx(i)-1$, so
\begin{align*}
[y(i),x(i)] &= [vy(i),j-1] \cup [j,vx(i)-1] \\
&= v^{-1}[vy(i),vy(n)-1] \cup v^{-1}[vy(n)+1,vx(i)] \\
&\subseteq v^{-1}[vy(i),vx(i)].
\end{align*}

\emph{Case 2c: $j < vy(i)$.} Then $[y(i),x(i)] = [vy(i)-1,vx(i)-1] = v^{-1}[vy(i),vx(i)]$, again implying~\eqref{enough}.
\end{proof}

\begin{theorem} \label{thm:same}
The pseudoreachability order coincides with the bubble-sorting order.
\end{theorem}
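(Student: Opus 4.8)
The plan is to show that $\geq_P$ and the bubble-sorting order have identical covering relations. Since both are graded by length on $\Sym_n$ --- by Theorem~\ref{pseudoreachable-graded} on one side, and because the bubble-sorting order is $C_2\x\cdots\x C_n$, graded by $\ell(x)=\sum_k\lambda_k(x)$ since $N(x)$ is reduced, on the other --- and since both have bottom $e$ and top $w_0$, agreement of covers forces the two orders to coincide. First I would record that $x\gtrdot_P y$ holds if and only if $x\rcov y$: if $x\gtrdot_P y$ then gradedness gives $\ell(x)=\ell(y)+1$, and unwinding the transitive closure shows $x\reach y$ (along any reaching chain the length is nonincreasing by Theorem~\ref{thm:bruhat}, and an equal-length reachable pair must be equal by Bruhat antisymmetry, so a total length drop of $1$ forces a single reaching step); the converse is already noted in \S\ref{sec:pseudoreach-order}. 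The covers of the bubble-sorting order are the pairs whose $\lambda$-sequences differ in exactly one coordinate by $1$. So everything reduces to showing, for all $x,y\in\Sym_n$, that $x\rcov y$ holds if and only if $\lambda(x)$ and $\lambda(y)$ differ in exactly one coordinate $k$, where $\lambda_k(x)=\lambda_k(y)+1$; this I would prove by induction on~$n$.

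The structural input is the compatibility of the projection $x\mapsto\hat x$ with conormal form. The leading factor of $N(x)$ is $v_{n-1}(x)=s_{x(n)}s_{x(n)+1}\cdots s_{n-1}$, the unique suffix of $\sigma_{n-1}$ sending $n\mapsto x(n)$, and one checks that $\hat x=v_{n-1}(x)^{-1}x$. Because $N(x)=v_{n-1}(x)\,N(\hat x)$ is reduced, this yields $\lambda_{n-1}(x)=n-x(n)$ together with $\lambda_k(x)=\lambda_k(\hat x)$ for all $k\le n-2$. I would establish this at the outset, since it translates all conormal data into the quantities $x(n)$ and $\hat x$ already manipulated in Lemma~\ref{lem:proj} and Corollary~\ref{cor:rcover}; Proposition~\ref{v-reduction} is precisely the reachability counterpart of peeling off this leading factor and can be invoked to support the compatibility.

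For the backward direction, suppose $\lambda(x)$ and $\lambda(y)$ differ only in coordinate $k$ by $1$. If $k\le n-2$, then $x(n)=y(n)$ and $\lambda(\hat x),\lambda(\hat y)$ differ only in coordinate $k$ by $1$ in $\Sym_{n-1}$, so $\hat x\rcov\hat y$ by induction and hence $x\rcov y$ by Corollary~\ref{cor:rcover}. If $k=n-1$, then $\hat x=\hat y$ (equal lower conormal data, hence equal in $\Sym_{n-1}$) and $x(n)=y(n)-1$, whence $x=s_{y(n)-1}y$ is a left weak cover of $y$, so $x\rcov y$ by Proposition~\ref{lem:inv-1}. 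For the forward direction, suppose $x\rcov y$. When $x(n)=y(n)$, Corollary~\ref{cor:rcover} gives $\hat x\rcov\hat y$, which is a bubble-sorting cover by induction; reinstating the common top coordinate shows $\lambda(x),\lambda(y)$ differ only in some coordinate $k\le n-2$ by $1$.

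The main obstacle is the remaining forward case $x(n)\neq y(n)$, where $x$ and $y$ carry different leading conormal factors and no direct inductive appeal is available. Here I would reuse the construction from the proof of Proposition~\ref{reachable-graded}: since $x\reach y$ forces $x(n)<y(n)$ by Theorem~\ref{thm:bruhat}, setting $z=s_{y(n)-1}y$ gives $z\rcov y$, and the same case analysis based on~\eqref{RC} gives $x\reach z$. But then $\ell(x)=\ell(y)+1=\ell(z)$ together with $x\geq_B z$ forces $x=z$, so $x=s_{y(n)-1}y$ with $\hat x=\hat y$, meaning $\lambda(x),\lambda(y)$ differ only in coordinate $n-1$ by $1$. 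This closes the induction, so the covering relations of $\geq_P$ and the bubble-sorting order agree, and the two graded orders coincide; this in turn identifies pseudoreachability with Armstrong's sorting order for the sorting word $\omega_n$.
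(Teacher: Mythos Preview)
Your proof is correct and follows essentially the same strategy as the paper: show that the covering relations of $\geq_P$ and of the bubble-sorting order agree, by induction on $n$, splitting into the cases $x(n)=y(n)$ (reduce via $\hat{\ }$\,/Corollary~\ref{cor:rcover}, equivalently Proposition~\ref{v-reduction}) and $x(n)\neq y(n)$ (conclude $x=s_{y(n)-1}y$).  The only tactical difference is in the forward subcase $x(n)\neq y(n)$: the paper argues directly from the Bruhat cover $x=yt_{a,n}$ and~\eqref{RC} that $y(a)=x(a)-1$, whereas you recycle the $z=s_{y(n)-1}y$ construction from the proof of Proposition~\ref{reachable-graded} and use $\ell(x)=\ell(z)$ together with $x\geq_Bz$ to force $x=z$; both routes reach the same conclusion.
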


\begin{proof}
It suffices to show that the two partial orders have the same covering relations, i.e., that
\[x\gtrdot_P y \quad\iff\quad \lambda(x)\gtrdot_C\lambda(y).\]

We induct on $n$; the base case $n=1$ is trivial.  Let $x,y\in\Sym_n$ with $n>1$, and let their conormal forms be
\[
x = u\bar x = (s_i\cdots s_{n-1}) \bar x,\qquad
y = v\bar y = (s_j\cdots s_{n-1}) \bar y
\]
where $i=x(n)=n-\lambda_{n-1}(x)$ and $j=y(n)=n-\lambda_{n-1}(y)$.
\medskip

($\impliedby$)\quad Suppose that $\lambda(x)\gtrdot_C\lambda(y)$.  Then either $i=j-1$ or $i=j$.  If $i=j-1$, then $\lambda(\bar x)=\lambda(\bar y)$, so $\bar x=\bar y$ and $x=s_iy$, which by Prop~\ref{lem:inv-1} implies $x\gtrdot_P y$.  If $i=j$, then $\lambda(\bar x)\gtrdot_C\lambda(\bar y)$.  Then $\bar x\gtrdot_P\bar y$ by induction, so $v\bar x\gtrdot_P v\bar y=y$
by Prop.~\ref{v-reduction}.
\medskip

($\implies$)\quad Suppose that $x\gtrdot_Py$. Then $x \gtrdot_B y$ by Theorem~\ref{thm:bruhat}, so $i\leq j$ (as noted in the proof of Prop.~\ref{cf-df-facts}).

If $i<j$, then $v$ is a proper suffix of $u$. By the definition of Bruhat order it must be the case that $x=yt_{a,b}$ for some $a<b$; in fact $b=n$ (otherwise $x(n)=y(n)$).  Then $x(n)=y(a)$ and $x(a)=y(n)$, and $x(k)=y(k)$ for $k\not\in\{a,n\}$.  Moreover, $y(a)<x(a)$ (since $x \gtrdot_B y$ and not vice versa).  On the other hand, if $y(a)\leq x(a)-2$, so that $y(a)<c<x(a)=y(n)$ for some $c$, then by~\eqref{RC} $c=y(k)$ for some $k\in[a+1,n-1]$, and in particular $x$ has at least three more inversions than $y$ --- not only $(a,n)$, but also $(a,k)$ and $(k,n)$, which contradicts the assumption $x\gtrdot_Py$.  Therefore $y(a)=x(a)-1$, i.e., $x(n)=y(n)-1$.  We conclude that $x=s_iy$, so $\lambda(x)\gtrdot_C\lambda(y)$ using the conormal forms above.

If $i=j$, then $u=v$, so $\bar x\gtrdot_P\bar y$ by Prop.~\ref{v-reduction}.  By induction $\lambda(\bar x)\gtrdot_C\lambda(\bar y)$, and prepending $n-i$ gives
$\lambda(x)\gtrdot_C\lambda(y)$ as well.
\end{proof}

\section{Pattern avoidance and reachability} \label{sec:avoid}

In this section, we establish two sufficient conditions for reachability using pattern avoidance. (It is dubious whether pattern avoidance conditions can completely characterize reachability.)

Let $\pi\in\Sym_n$ and $\sigma\in\Sym_m$, where $m\leq n$.  A \defterm{$\sigma$-pattern} is a subsequence $\pi(i_1),\dots,\pi(i_m)$ in the same relative order as $\sigma$, i.e., such that $1\leq i_1<\cdots<i_m\leq n$ and $\pi(i_j)<\pi(i_k)$ if and only if $\sigma(j)<\sigma(k)$.  If $\pi$ contains no $\sigma$-pattern then we say that $\pi$ \defterm{avoids} $\sigma$.

\begin{theorem} \label{thm:213-avoiding}
If $x\geq_By$ and $y$ avoids $213$, then $x\reach y$.
\end{theorem}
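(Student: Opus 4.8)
The plan is to verify the Reachability Criterion~\eqref{RC} directly and argue by contradiction. Since $x\geq_B y$ is given, the task reduces to showing that $[y(i),x(i)]\subseteq y[i,n]$ for every $i\in[n]$. Suppose this containment fails for some $i$. Because $y(i)\in y[i,n]$ always holds, and because the interval $[y(i),x(i)]$ is empty or a singleton when $x(i)\le y(i)$, failure forces the existence of a value $v$ with $y(i)<v\le x(i)$ whose preimage $i'=y^{-1}(v)$ satisfies $i'<i$. In particular $i\ge2$, and we have positions $i'<i$ with $y(i')=v>y(i)$.

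The first main step is to use $213$-avoidance to push all large values to the left of position $i$. Since $i'<i$ and $y(i')>y(i)$, any position $p>i$ with $y(p)>v$ would exhibit a $213$-pattern on the indices $(i',i,p)$, as then $y(i)<y(i')<y(p)$. As $y$ avoids $213$, no such $p$ exists, so $y(p)<v$ for all $p>i$; together with $y(i)<v$ this shows every value in $y[i,n]$ is strictly less than $v$. Hence all $n-v+1$ values of $\{v,v+1,\dots,n\}$ sit in positions $[1,i-1]$, which in the notation of~\eqref{angle-brackets} reads $y\langle i-1,v\rangle=n-v+1$, the largest value $y\langle i-1,v\rangle$ can possibly attain.

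The second step propagates this to $x$ and yields the contradiction. By the Bruhat criterion~\eqref{bruhat-criterion} we have $x\langle i-1,v\rangle\ge y\langle i-1,v\rangle=n-v+1$; but $x\langle i-1,v\rangle$ counts only those positions $\le i-1$ at which $x$ takes a value $\ge v$, a count bounded above by the total number $n-v+1$ of values $\ge v$. Thus $x\langle i-1,v\rangle=n-v+1$, so every value $\ge v$ in $x$ lies in the first $i-1$ positions. This contradicts $x(i)\ge v$, since $x(i)$ is a value $\ge v$ occupying position $i>i-1$. Therefore the containment cannot fail, and~\eqref{RC} holds, giving $x\reach y$.

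I expect the crux to be locating the right Bruhat inequality, namely the one indexed by $(i-1,v)$. The whole argument turns on recognizing that $213$-avoidance forces $y\langle i-1,v\rangle$ to attain its maximal possible value $n-v+1$, so that the Bruhat comparison pins $x\langle i-1,v\rangle$ to the same maximum and confines every large value of $x$ to the left of position $i$. Once this choice of indices is made, the remaining checks are routine, including the degenerate case $i=1$, where $[y(1),x(1)]\subseteq[n]=y[1,n]$ holds automatically.
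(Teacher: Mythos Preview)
Your proof is correct and rests on the same two ingredients as the paper's argument: $213$-avoidance to force all values $\ge v$ into positions $[1,i-1]$ of $y$, and the Bruhat inequality $x\langle i-1,v\rangle\ge y\langle i-1,v\rangle$ to transfer this to $x$ and contradict $x(i)\ge v$.

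The organization differs slightly. The paper fixes $m=y^{-1}(x(i))$ and splits into the cases $m>i$ and $m<i$; in the first it exhibits an explicit $213$-pattern $y(u),y(i),y(m)$, and only in the second does it invoke the Bruhat count (with the specific value $v=x(i)$). Your version is more streamlined: by taking $v$ to be \emph{any} witness to the failure of~\eqref{RC}, you guarantee $y^{-1}(v)<i$ from the outset and avoid the case distinction entirely. The paper's Case~1 is absorbed into your step ruling out $y(p)>v$ for $p>i$, since there $p=m$ would give exactly the pattern the paper writes down. What you gain is uniformity; what the paper's split buys is that in Case~1 one never needs to touch the Bruhat hypothesis at all.
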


\begin{proof}
Suppose that $x\geq_By$ and $y$ avoids $213$, but $x\notreach y$.
Let $i$ be any index such that $\df_i(x,y) = 0$.  By Prop.~\ref{cf-df-facts} we know that $1<i<n$ and that $y(i) < x(i)$.  In particular, $m\neq i$, where $m = y^{-1}(x(i))$; that is, $y(m)=x(i)$.

First, suppose that $m > i$.  We claim that there exists some $u<i$ such that $y(i) < y(u) < y(m)$.  Otherwise, $J_i\geq y(m)-y(i)+1$, and then $k=y(m)-y(i)$ has the properties $k<J_i$ and $y(i)+k=y(m)=x(i)$, so $k\in\DF_i(x,y)$, contradicting the assumption $\df_i(x,y) = 0$.  Therefore $y(u), y(i), y(m)$ is a 213-pattern.

Second, suppose that $m < i$. If $y(k) > y(m)$ for some $k>i$, then $y(m),y(i),y(k)$ is a 213-pattern. On the other hand, suppose that $y(k) < y(m) = x(i)$ for all $k > i$ (hence for all $k \ge i$).
Then
\[\{k\in[i,n]:\ y(k)<x(i)\}=[i,n]~\supsetneq~[i+1,n]\supseteq\{k\in[i,n]:\ x(k)<x(i)\}\subseteq[i+1,n]\]
so
\begin{align*}
\#\{k\in[i,n]:\ y(k)<x(i)\} &> \#\{k\in[i,n]:\ x(k)<x(i)\}\\
\therefore\quad \#\{k\in[1,i-1]:\ y(k)<x(i)\} &< \#\{k\in[1,i-1]:\ x(k)<x(i)\}\\
\therefore\quad \#\{k\in[1,i-1]:\ y(k)\geq x(i)\} &> \#\{k\in[1,i-1]:\ x(k)\geq x(i)\}.
\end{align*}
That is, $y\langle i-1,x(i)\rangle>x\langle i-1,x(i)\rangle$, contradicting the assumption $x\geq_By$.
\end{proof}

Theorem~\ref{thm:213-avoiding} partially answers the question of when the converse of Theorem~\ref{thm:bruhat} holds, i.e., which Bruhat relations are also relations in pseudoreachability order. We next study if there is an analogous condition on $x$, rather than $y$, that suffices for reachability.  One such condition that allows us to restrict $x$ instead of $y$ is to ensure that only very few entries $x(i)$ are large with respect to $i$.

\begin{lemma} \label{lem:x1}
Let $x\in\Sym_n$.  The following conditions are equivalent:
\begin{enumerate}
\item $x^{-1}(i) \leq i+1$ for all $i\in[n]$.
\item $x\langle j,j\rangle = 1$ for all $j \in [n]$.
\item $x$ avoids both 231 and 321.
\item $x$ is of the form $s_{i_1}\cdots s_{i_k}$, where $n-1\geq i_1>\cdots>i_k\geq1$.
\end{enumerate}
\end{lemma}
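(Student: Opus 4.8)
The plan is to prove the equivalences by first establishing the cycle $(2)\Rightarrow(3)\Rightarrow(1)\Rightarrow(2)$ among the first three conditions, and then linking $(4)$ to $(1)$ by a single induction. The engine for the first three is a reformulation of $(2)$. Since the values $x(1),\dots,x(j)$ are $j$ distinct positive integers, at most $j-1$ of them can be less than $j$; hence $x\langle j,j\rangle=\#\{k\le j:\ x(k)\ge j\}\ge 1$ (this is \eqref{bracket-ineq} for $i=j$), with equality precisely when all of $1,\dots,j-1$ already appear among $x(1),\dots,x(j)$. Thus
\[
x\langle j,j\rangle=1\ \ \forall j \iff \{1,\dots,j-1\}\subseteq x[1,j]\ \ \forall j \iff x^{-1}(i)\le i+1\ \ \forall i,
\]
where the last step uses that requiring $x^{-1}(i)\le j$ for all $j\ge i+1$ is the same as the single inequality $x^{-1}(i)\le i+1$. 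This is exactly $(1)\iff(2)$.

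For the remaining two implications, note that a permutation avoids both $231$ and $321$ precisely when, for every position $k$, at most one earlier position carries a larger value (the two forbidden patterns are the two ways of having two such positions). For $(2)\Rightarrow(3)$, suppose for contradiction that $i<j'<k$ satisfy $x(i),x(j')>x(k)=:v$. By the equivalent condition $(1)$ we have $k=x^{-1}(v)\le v+1$, so $i,j'$ both lie in $[1,v+1]$; meanwhile the reformulation applied with $j=v+1$ shows that the $v$ values $1,\dots,v$ fill $v$ of the $v+1$ positions of $[1,v+1]$, leaving only one position there with a value exceeding $v$. This contradicts the presence of the two positions $i,j'$. For $(3)\Rightarrow(1)$, suppose $x^{-1}(v)=p\ge v+2$ for some $v$. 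Among the $p-1$ positions $1,\dots,p-1$, at most $v-1$ carry values below $v$, so at least $(p-1)-(v-1)=p-v\ge 2$ of them carry values exceeding $v=x(p)$; any two of these, with position $p$, form a forbidden pattern. This closes the cycle, so $(1)$, $(2)$, and $(3)$ are equivalent.

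It remains to identify these permutations with those of the form in $(4)$, which I would do by induction on $\ell(x)$, in both directions driven by left multiplication by a single generator. For $(4)\Rightarrow(1)$, write $x=s_{i_1}x'$ with $x'=s_{i_2}\cdots s_{i_k}$ and $i_1>i_2>\cdots>i_k$; every generator in $x'$ has index $<i_1$, so $x'$ fixes each value $\ge i_1+1$. Left multiplication by $s_{i_1}$ only interchanges the positions of the values $i_1$ and $i_1+1$, and one computes $x^{-1}(i_1)=(x')^{-1}(i_1+1)=i_1+1$ while the other inverse values are unchanged, so condition $(1)$ is inherited from $x'$. For $(1)\Rightarrow(4)$, if $x\neq e$ satisfies $(1)$, then because a permutation with $x^{-1}(v)\le v$ for all $v$ must be the identity, there is a largest value $v$ with $x^{-1}(v)=v+1$; call it $i_1$ and set $x'=s_{i_1}x$. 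Maximality of $i_1$ forces $x^{-1}(i_1+1)\le i_1$ (it cannot equal $i_1+2$, and position $i_1+1$ is occupied by $i_1$), so the value $i_1+1$ precedes the value $i_1$; hence $\ell(x')=\ell(x)-1$, the permutation $x'$ again satisfies $(1)$, and the largest value $w$ with $(x')^{-1}(w)=w+1$ is strictly smaller than $i_1$. By induction $x'=s_{i_2}\cdots s_{i_k}$ with $i_1>i_2>\cdots>i_k$, and prepending $s_{i_1}$ gives the required form for $x$.

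The genuinely delicate point is this last link between $(4)$ and $(1)$: one must choose the correct generator to strip off and check that removing it both reduces length and preserves condition $(1)$. The right choice is the largest value $v$ with $x^{-1}(v)=v+1$ (equivalently the largest index in a decreasing reduced word), and the crux is that its maximality is exactly what guarantees that $i_1+1$ lies to the left of $i_1$, which is what makes the multiplication length-reducing; everything else is routine bookkeeping on the two swapped values. As a byproduct the induction shows that every word $s_{i_1}\cdots s_{i_k}$ with strictly decreasing indices is reduced, so there are $2^{n-1}$ permutations of type $(4)$, matching the $2^{n-1}$ permutations with all left-inversion counts in $\{0,1\}$ picked out by $(3)$ — a convenient consistency check.
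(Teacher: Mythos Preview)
Your proof is correct. The treatment of $(1)\iff(2)$ and of the link between $(1)$ and $(3)$ is essentially the same as the paper's (the paper compresses $(1)\iff(3)$ to the single observation that avoiding $231$ and $321$ is exactly the statement that no value $i$ sits past position $i+1$, whereas you unfold this into two pigeonhole arguments, but the content is identical).

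The one genuine organizational difference is in linking $(4)$ to the other conditions. The paper inducts on $n$: it observes that $Z_n=Z_{n-1}\cup s_{n-1}Z_{n-1}$ and that a $\{231,321\}$-avoider must have $x(n)\in\{n-1,n\}$, so after possibly left-multiplying by $s_{n-1}$ one drops to $\Sym_{n-1}$. You instead induct on $\ell(x)$, locating the largest value $v$ with $x^{-1}(v)=v+1$ and stripping off $s_v$; the key check that maximality forces $x^{-1}(v+1)\le v$ (so the multiplication is length-decreasing and the new leading index drops) is correct. Both arguments peel off the leftmost generator of the decreasing word; the paper's version is slightly slicker because it always peels $s_{n-1}$ (or nothing), while yours has the advantage of directly producing the reduced decreasing expression and verifying reducedness along the way, which is what justifies your $2^{n-1}$ count.
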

The number of these permutations is $2^{n-1}$, which is easiest to see from condition~(4).
Conditions~(1) and~(3) were mentioned respectively by J.~Arndt (June 24, 2009) and M.~Riehl (August 5, 2014) respectively in the comments on sequence A000079 in \cite{OEIS}.
Accordingly, we will call a permutation satisfying the condition of Lemma~\ref{lem:x1} an \defterm{AR permutation} (for Arndt--Riehl).

% Sage code for generating these permutations.  WARNING: Multiplying permutations with * works left to right in Sage!  Be careful when working with condition (4).
% AR = lambda x: all(x.inverse()(i)<=i+1 for i in range(1,len(x)+1))
% f = lambda n: [x for x in Permutations(n) if AR(x)]

\begin{proof}
$(1)\iff(2)$: Formula~\eqref{angle-brackets} implies that
\begin{align*}
\forall j\in[n]:\ x\langle j,j\rangle = 1
&\iff \forall j\in[n]:\ [1,j-1]\subseteq x[1,j]\\
&\iff \forall j\in[n]:\ x^{-1}[1,j-1]\subseteq [1,j]\\
&\iff \forall i\in[n]:\ x^{-1}[1,i]\subseteq [1,i+1]
\end{align*}
since the last two statements differ only by the trivially true cases $i=0$ and $i=n$.

$(3)\iff(1)$: Condition~(3) holds if and only if no digit $i\in[n]$ occurs later than position $i+1$, but this is precisely condition (1).

$(4)\iff(1)/(3)$: Let $Y_n$ be the set of permutations in $\Sym_n$ satisfying the equivalent conditions~(1) and~(3), and let $Z_n$ be the set satisfying condition (4).  For $n\leq2$ we evidently have $Y_n=Z_n=\Sym_n$.  For $n\geq 3$, we proceed by induction.  Observe that $Z_n=Z_{n-1}\cup s_{n-1}Z_{n-1}$, and that left-multiplication by $s_{n-1}$ (i.e., swapping the locations of $n-1$ and $n$) does not affect condition (1), which is always true for $i\in\{n-1,n\}$.  Therefore $Z_n\subseteq Y_n$.

On the other hand, if $w\in Y_n$ then $w_n\in\{n-1,n\}$, otherwise $w_n$, together with the digits $n-1$ and $n$, would form a 231- or 321-pattern.  Therefore, $w'_n=n$, where either $w'=w$ or $w'=s_{n-1}w$.  By induction $w'\in Z_{n-1}$, so $w\in Z_n$ as desired.
\end{proof}

\begin{corollary} \label{cor:arn-bru}
If $x$ is AR and $y\leq_Bx$, then $y$ is AR as well.
\end{corollary}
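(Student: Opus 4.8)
The plan is to prove this directly from the Bruhat criterion~\eqref{bruhat-criterion}, using the characterization of AR permutations furnished by condition~(2) of Lemma~\ref{lem:x1}. That condition says $x$ is AR if and only if $x\langle j,j\rangle=1$ for all $j\in[n]$, and this is exactly the flavor of statement that Bruhat comparison controls: recall that $y\leq_B x$ is equivalent to $y\langle i,j\rangle\leq x\langle i,j\rangle$ for all $i,j$. Specializing to the diagonal $i=j$ should do all the work, so I would begin by restating AR-ness in the form $x\langle j,j\rangle=1$.

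The key observation is that the diagonal quantity $x\langle j,j\rangle$ is \emph{bounded below} by $1$ for every permutation, by the pigeonhole inequality~\eqref{bracket-ineq} with $i=j$, which gives $x\langle j,j\rangle\geq j-j+1=1$. Thus AR permutations are precisely those for which each diagonal bracket attains its minimum possible value. This framing makes the corollary almost immediate.

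Concretely, I would argue: suppose $x$ is AR and $y\leq_B x$. Fix $j\in[n]$. By~\eqref{bruhat-criterion} applied with $i=j$ we have $y\langle j,j\rangle\leq x\langle j,j\rangle$, and since $x$ is AR the right-hand side equals $1$. On the other hand~\eqref{bracket-ineq} forces $y\langle j,j\rangle\geq1$. Hence $y\langle j,j\rangle=1$. As $j$ was arbitrary, $y$ satisfies condition~(2) of Lemma~\ref{lem:x1} and is therefore AR.

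I do not expect a genuine obstacle here; the only real choice is which of the four equivalent descriptions of AR permutations to invoke, and the bracket-counting characterization~(2) is the natural match for Bruhat order. One could alternatively try to run the argument through the pattern-avoidance characterization~(3), showing that a $231$- or $321$-pattern in $y$ would lift to a violation of some Bruhat inequality, but that route is clumsier and I would avoid it in favor of the clean diagonal argument above.
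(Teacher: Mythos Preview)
Your argument is correct and is essentially identical to the paper's own proof: both use characterization~(2) of Lemma~\ref{lem:x1}, apply the Bruhat inequality~\eqref{bruhat-criterion} on the diagonal to get $y\langle j,j\rangle\leq x\langle j,j\rangle=1$, and then invoke pigeonhole~\eqref{bracket-ineq} to rule out $y\langle j,j\rangle=0$.
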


\begin{proof}
Lemma \ref{lem:x1} asserts that $x\langle i,i\rangle = 1$ for all $i \in [n]$. Since $y\leq_Bx$, $y\langle i,i\rangle = 1$ or $0$, but the latter could not happen by the pigeonhole principle.
\end{proof}

An \defterm{exceedance} of a permutation $x\in\Sym_n$ is an index $k\in[n]$ such that $x(k)>k$.

\begin{lemma} \label{lem:x2}
Let $x \in \Sym_{n}$ be an AR permutation.  Suppose that $k$ is an exceedance of $x$, and let $i=x(k)$.  Then $x(j)=j-1$ for all $j\in[k+1,i]$.
\end{lemma}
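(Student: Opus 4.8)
The plan is to exploit the value-form of the AR condition. By Lemma~\ref{lem:x1}(1), $x$ being AR is equivalent to $x^{-1}(v)\leq v+1$ for every value $v$ (dually, $x(p)\geq p-1$ for every position $p$). I will use $x^{-1}(v)\leq v+1$ as an upper bound on where each digit can sit, and extract from it a counting constraint on the first block of positions; everything then follows by a short induction.

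First I would show that the positions $1,\dots,k$ are occupied by exactly the values $\{1,\dots,k-1\}\cup\{i\}$. Indeed, for each $v\in[1,k-1]$ we have $x^{-1}(v)\leq v+1\leq k$, so all of the values $1,\dots,k-1$ occur among the first $k$ positions; together with $x(k)=i$ (and $i\geq k+1$, so $i\notin[1,k-1]$) these are $k$ distinct values occupying the $k$ positions $1,\dots,k$, hence they fill those positions exactly. Consequently none of the values $k,k+1,\dots,i-1$ appears in positions $1,\dots,k$: each of them must sit at some position $\geq k+1$.

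The second step is an induction pinning down the positions $k+1,\dots,i$ one at a time. For the base case, the value $k$ satisfies $x^{-1}(k)\leq k+1$ but lies at a position $\geq k+1$ by the previous paragraph, forcing $x(k+1)=k$. Inductively, suppose $x(k+s)=k+s-1$ for $s=1,\dots,t$, where $k+t\leq i-1$. Then the value $k+t$ has $x^{-1}(k+t)\leq k+t+1$, cannot occupy positions $1,\dots,k$ (by the first step), and cannot occupy positions $k+1,\dots,k+t$ (which already hold $k,\dots,k+t-1$), so $x^{-1}(k+t)=k+t+1$, i.e.\ $x(k+t+1)=k+t$. Running $t$ up to $i-k-1$ yields $x(j)=j-1$ for all $j\in[k+1,i]$, as desired.

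The only real content is the counting in the first step --- the observation that the values $1,\dots,k-1$ are forced into the first $k$ positions, which together with $x(k)=i$ evacuates the values $k,\dots,i-1$ to positions beyond $k$. Once that is established, the inductive pinning-down is automatic from the single inequality $x^{-1}(v)\leq v+1$, and I expect no genuine obstacle; the main thing to get right is the bookkeeping on which values remain available at each position.
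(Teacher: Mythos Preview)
Your proof is correct and follows essentially the same route as the paper's: both first establish that positions $1,\dots,k$ hold exactly the values $\{1,\dots,k-1\}\cup\{i\}$, then inductively pin down $x(k+1),\dots,x(i)$. The only cosmetic difference is that you phrase the second step via condition~(1) of Lemma~\ref{lem:x1} ($x^{-1}(v)\leq v+1$), whereas the paper uses the equivalent condition~(2) ($x\langle j,j\rangle=1$) to obtain $x(j)<j$ before the same induction.
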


\begin{proof}
The argument of Lemma~\ref{lem:x1} implies that $[1,k-1]\subseteq x[1,k]$; however, since $x(k)>k$ we have in fact
$[1,k-1]=x[1,k-1]$.

Now let $j\in[k+1,i]$.  Lemma~\ref{lem:x1} also asserts that $x\langle j,j\rangle=\#A_j=1$, where $A_j=\{m\in[j]:\ x(m)\geq j\}$.  Certainly $k\in A_j$, so $j\not\in A_j$, that is, $x(j)<j$.  But since $x(j)\geq k$ for each such $j$, we can infer in turn that $x(k+1)=k$, $x(k+2)=k+1$, \dots, $x(i)=i-1$.
\end{proof}

\begin{theorem} \label{thm:x}
If $x\geq_By$ and $x$ is AR, then $x\reach y$.
\end{theorem}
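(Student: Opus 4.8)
The plan is to verify the Reachability Criterion~\eqref{RC} directly, converting the AR hypothesis into two clean numerical facts and then letting a single Bruhat inequality do all the work. First I would propagate the hypothesis: since $x$ is AR and $y\leq_Bx$ (Theorem~\ref{thm:bruhat}), Corollary~\ref{cor:arn-bru} tells us that $y$ is AR as well. I then fix $i\in[n]$ and aim to show $[y(i),x(i)]\subseteq y[i,n]$. By Proposition~\ref{cf-df-facts}\ref{dfi} this holds automatically whenever $y(i)\geq x(i)$, and it is trivial when $i=1$ since $y[1,n]=[n]$; so I may assume $i\geq2$ and $y(i)<x(i)$.

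The key structural input is that an AR permutation never sends a value down by more than one step. Applying condition~(1) of Lemma~\ref{lem:x1} to $y$ at the value $y(i)$ gives $i=y^{-1}(y(i))\leq y(i)+1$, i.e.\ $y(i)\geq i-1$; combined with $y(i)<x(i)$ this forces $x(i)\geq i$. Now condition~(2) of Lemma~\ref{lem:x1} gives $x\langle i,i\rangle=1$, and since $x(i)\geq i$, position $i$ is the \emph{unique} $k\leq i$ with $x(k)\geq i$. Consequently $x\langle i-1,m\rangle=0$ for every $m\geq i$, because $\{k\le i-1:\ x(k)\ge m\}\subseteq\{k\le i-1:\ x(k)\ge i\}=\0$.

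With this in hand I would take an arbitrary $m\in[y(i),x(i)]$ and argue that $m\in y[i,n]$, i.e.\ $y^{-1}(m)\geq i$. For $m=y(i)$ this is immediate since $y^{-1}(y(i))=i$ (note $y(i)$ itself lies in $y[i,n]$ even when $y(i)=i-1$). For $m>y(i)$, the bound $y(i)\geq i-1$ yields $m\geq i$; were $y^{-1}(m)<i$, then position $y^{-1}(m)$ would contribute to the count $y\langle i-1,m\rangle$, giving $y\langle i-1,m\rangle\geq1>0=x\langle i-1,m\rangle$ and contradicting $y\leq_Bx$ through the Bruhat criterion~\eqref{bruhat-criterion}. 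Hence $[y(i),x(i)]\subseteq y[i,n]$ for all $i$, and~\eqref{RC} gives $x\reach y$.

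The only real subtlety is the reduction to the range $m\geq i$, which is exactly what allows the single Bruhat inequality at $\langle i-1,m\rangle$ to close the argument; the boundary value $m=y(i)$ must be peeled off separately since it can equal $i-1$. I do not expect a serious obstacle beyond this bookkeeping, since the AR hypothesis distills into the two facts $y(i)\geq i-1$ and $x\langle i,i\rangle=1$ that drive everything.
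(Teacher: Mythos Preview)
Your argument is correct, and it is genuinely tidier than the paper's.  Both proofs use Corollary~\ref{cor:arn-bru} to know that $y$ is AR, but you invoke this fact \emph{first}: applying condition~(1) of Lemma~\ref{lem:x1} to $y$ yields $y(i)\geq i-1$, which together with $y(i)<x(i)$ forces $x(i)\geq i$.  In the paper's language this shows immediately that the unique $k\leq i$ with $x(k)\geq i$ is $k=i$ itself, so the paper's Case~2 ($k<i$) simply never arises and Lemma~\ref{lem:x2} is never needed.  The remainder of your argument---using $x\langle i-1,m\rangle=0$ for each $m\geq i$ and the Bruhat inequality~\eqref{bruhat-criterion} to rule out $y^{-1}(m)<i$---is essentially a pointwise version of the paper's Case~1 computation that $y[1,i-1]=[i-1]$.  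The paper instead postpones the AR property of $y$ until Case~2, where it combines with Lemma~\ref{lem:x2} (which gives $x(i)=i-1$ when $k<i$) to produce the contradiction $y(i)\leq i-2$.  Your route trades the structural Lemma~\ref{lem:x2} for an earlier use of Corollary~\ref{cor:arn-bru}, eliminating the case split entirely.

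One cosmetic point: the parenthetical citation of Theorem~\ref{thm:bruhat} for $y\leq_Bx$ is unnecessary, since $x\geq_By$ is part of the hypothesis rather than something derived from reachability.
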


\begin{proof}
Suppose that $x\geq_By$ and $x$ is AR, but $x\notreach y$.
Let $i$ be some index such that $\df_{i}(x, y) = 0$.
By~\eqref{RC}, there exists $j < i$ such that
\begin{equation} \label{banana}
y(i) < y(j) \le x(i).
\end{equation}
By Lemma \ref{lem:x1}, $x\langle i,i\rangle = 1$; that is, there exists some (unique) $k \leq i$ such that $x(k) \ge i$.

First, suppose that $k=i$.  Then $x\langle i-1,i\rangle = 0$, and $y\langle i-1,i\rangle = 0$ as well because $y \leq_B x$.  Hence $y[1,i-1]=[i-1]$.  But then~\eqref{banana} implies that $y(i)<y(j)\leq i-1$ as well, a contradiction.

Second, suppose that $k < i$.  Then $y(i)<x(i)<i$ by Lemma~\ref{lem:x2}, so $y(i)\leq i-2$.
Set $p=y(i)$; then $y^{-1}(p)=i\geq k+2$.  But then $y$ is not AR, which violates Corollary~\ref{cor:arn-bru}.
\end{proof}

\bibliographystyle{plain}
\bibliography{biblio}

\begin{thebibliography}{1}

\bibitem{Armstrong}
Drew Armstrong.
\newblock The sorting order on a {C}oxeter group.
\newblock {\em J. Combin. Theory Ser. A}, 116(8):1285--1305, 2009.

\bibitem{BB}
Anders Bj\"{o}rner and Francesco Brenti.
\newblock {\em Combinatorics of {C}oxeter groups}, volume 231 of {\em Graduate
  Texts in Mathematics}.
\newblock Springer, New York, 2005.

\bibitem{duCloux}
Fokko du~Cloux.
\newblock A transducer approach to {C}oxeter groups.
\newblock {\em J. Symbolic Comput.}, 27(3):311--324, 1999.

\bibitem{KW}
Alan~G. Konheim and Benjamin Weiss.
\newblock An occupancy discipline and applications.
\newblock {\em SIAM J. Appl. Math.}, 14(6):1266--1274, 1966.

\bibitem{Pyke}
Ronald Pyke.
\newblock The supremum and infimum of the {P}oisson process.
\newblock {\em Ann. Math. Statist.}, 30:568--576, 1959.

\bibitem{Riordan}
John Riordan.
\newblock Ballots and trees.
\newblock {\em J. Combinatorial Theory}, 6:408--411, 1969.

\bibitem{OEIS}
N.J.A.\ Sloane.
\newblock The {O}n-{L}ine {E}ncyclopedia of {I}nteger {S}equences, 2020.
\newblock Published electronically at \href{https://oeis.org}{\tt
  https://oeis.org}.

\bibitem{Yan}
Catherine~H. Yan.
\newblock Parking functions.
\newblock In {\em Handbook of enumerative combinatorics}, Discrete Math. Appl.
  (Boca Raton), pages 835--893. CRC Press, Boca Raton, FL, 2015.

\end{thebibliography}
\end{document}